\theoremstyle{definition}
\newtheorem{definition}{Definition}
\theoremstyle{theorem}
\newtheorem{theorem}{Theorem}
\newtheorem{lemmach}{Lemma}
\theoremstyle{theorem}
\theoremstyle{remark}
\theoremstyle{remark}
\theoremstyle{example}
\theoremstyle{notation}
\newtheorem{notation}{Notation}
\title{Sections and Chapters}
\title{H-coloring Dichotomy in Proof Complexity}
\author{Azza Gaysin\footnote{This work has been supported by Charles University
   Research Centre program No.UNCE/SCI/022.}}
\date{%
    Department of Algebra, Faculty of Mathematics and Physics\\%
    Charles University in Prague\\%
}
\begin{document}
\allowdisplaybreaks

\maketitle

\begin{abstract}
The $\mathcal{H}$-coloring problem for undirected simple graphs is a computational problem from a huge class of the constraint satisfaction problems (CSP): an $\mathcal{H}$-coloring of a graph $\mathcal{G}$ is just a homomorphism from $\mathcal{G}$ to $\mathcal{H}$ and the problem is to decide for fixed $\mathcal{H}$, given $\mathcal{G}$, if a homomorphism exists or not.

The dichotomy theorem for the $\mathcal{H}$-coloring problem was proved by Hell and Nešetřil \cite{10.1016/0095-8956(90)90132-J} in 1990 (an analogous theorem for all CSPs was recently proved by Zhuk \cite{zhuk2017proof} and Bulatov \cite{bulatov2017dichotomy}) and it says that for each $\mathcal{H}$ the problem is either $p$-time decidable or $NP$-complete. Since negations of unsatisfiable instances of CSP can be expressed as propositional tautologies, it seems to be natural to investigate the proof complexity of CSP.

We show that the decision algorithm in the $p$-time case of the $\mathcal{H}$-coloring problem can be formalized in a relatively weak theory and that the tautologies expressing the negative instances for such $\mathcal{H}$ have short proofs in propositional proof system $R^*(log)$, a mild extension of resolution. In fact, when the formulas are expressed as unsatisfiable sets of clauses they have $p$-size resolution proofs. To establish this we use a well-known connection between theories of bounded arithmetic and propositional proof systems.

We complement this result by a lower bound result that holds for many weak proof systems for a special example of $NP$-complete case of the $\mathcal{H}$-coloring problem, using known results about proof complexity of the Pigeonhole Principle.

\end{abstract}

\section{Introduction}

The constraint satisfaction problem (CSP) is a computational problem. The problem is in finding an assignment of values to a set of variables, such that this assignment satisfies some specified feasibility conditions. If such assignment exists, we call the instance of CSP satisfiable and unsatisfiable otherwise. One can also define CSP through the homomorphism between relational structures: in the constraint satisfaction problem associated with a structure $\mathcal{H}$, denoted by CSP($\mathcal{H}$) the question is, given a structure $\mathcal{G}$ over the same vocabulary, whether there exists a homomorphism from $\mathcal{G}$ to $\mathcal{H}$. It turns out, that all CSPs can be classified with only two complexity classes: there are either polynomial-time CSPs, or $NP$-complete CSPs. This dichotomy was conjectured by Feder and Vardi in 1998 \cite{1} and recently proved by Zhuk \cite{zhuk2017proof} and Bulatov \cite{bulatov2017dichotomy}.

The $\mathcal{H}$-coloring problem is essentially CSP($\mathcal{H}$) on relational structures that are undirected graphs. Its computational complexity was investigated years ago and the Dichotomy theorem for the $\mathcal{H}$-coloring problem was proved by Hell and Nešetřil \cite{10.1016/0095-8956(90)90132-J} in 1990.

\begin{theorem}[The Dichotomy theorem for the $\mathcal{H}$-coloring problem, \cite{10.1016/0095-8956(90)90132-J}]\label{TWP1AC01}
If $\mathcal{H}$ is bipartite then the $\mathcal{H}$-coloring problem is in $P$. Otherwise the $\mathcal{H}$-coloring problem is $NP$-complete.
\end{theorem}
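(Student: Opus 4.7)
The plan is to split the proof into the easy direction (bipartite $\mathcal{H}$ is in $P$) and the harder direction ($NP$-hardness for non-bipartite $\mathcal{H}$). I would first dispose of containment in $NP$, which is immediate: a homomorphism $\mathcal{G} \to \mathcal{H}$ is specified by a map $V(\mathcal{G}) \to V(\mathcal{H})$, written in $|V(\mathcal{G})|\lceil\log_2 |V(\mathcal{H})|\rceil$ bits and verifiable in linear time by scanning the edges of $\mathcal{G}$.

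For the bipartite case, the key observation is that if $\mathcal{H}$ is bipartite and contains at least one edge, then $\mathcal{H}$ admits a homomorphism onto $K_2$ (given by its bipartition) while $K_2$ embeds in $\mathcal{H}$ via any single edge. Composing, $\mathcal{G} \to \mathcal{H}$ iff $\mathcal{G} \to K_2$, i.e.\ $\mathcal{G}$ is bipartite, which is decidable in linear time by breadth-first search; the degenerate case of an edgeless $\mathcal{H}$ reduces to checking that $\mathcal{G}$ is edgeless.

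For $NP$-hardness when $\mathcal{H}$ is non-bipartite, the target is to reduce from classical $3$-coloring ($K_3$-coloring), whose $NP$-hardness is standard. The main tool is the \emph{indicator construction}: pick a connected graph $I$ with two distinguished vertices $a, b$, and define a derived graph $\mathcal{H}^*$ on the vertex set of $\mathcal{H}$ whose edges are the pairs $\{u,v\}$ for which some homomorphism $I \to \mathcal{H}$ sends $a \mapsto u$ and $b \mapsto v$. Replacing each edge of an input graph $\mathcal{G}$ with a fresh copy of $I$ (identifying $a, b$ with the endpoints) gives in polynomial time a graph $\mathcal{G}^*$ satisfying $\mathcal{G}^* \to \mathcal{H}$ iff $\mathcal{G} \to \mathcal{H}^*$, so $\mathcal{H}^*$-coloring reduces to $\mathcal{H}$-coloring. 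The aim is then to pick $I$ so that $\mathcal{H}^*$-coloring is already known to be $NP$-hard, ideally because $\mathcal{H}^*$ contains $K_3$ as a retract.

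The hard part is selecting the indicator, together with an auxiliary \emph{sub-indicator} (which restricts the target vertex set to those satisfying some homomorphism-definable unary predicate), in a way that handles every non-bipartite $\mathcal{H}$ uniformly. I would run an induction on the odd girth $2k+1$ of $\mathcal{H}$: odd paths of appropriate length, used as indicators, lower the odd girth and eventually produce a derived target from which $3$-coloring reduces. The subtle cases are the graphs on which this procedure stabilises --- projective non-bipartite graphs, essentially --- and these require the detailed combinatorial argument of Hell and Ne\v{s}et\v{r}il; this is the step I expect to be the real obstacle, and in a compact presentation one would invoke their sub-indicator lemma as a black box.
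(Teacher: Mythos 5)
The paper does not prove this theorem at all: it is quoted verbatim from Hell and Ne\v{s}et\v{r}il \cite{10.1016/0095-8956(90)90132-J}, and the only ingredient of it that the paper actually uses is the tractable direction, in the form of Lemma \ref{L1} ($\mathcal{G}$ is $\mathcal{H}$-colorable iff $\mathcal{G}$ is bipartite, for bipartite $\mathcal{H}$ with an edge). Your treatment of that direction is correct and is exactly the paper's Lemma \ref{L1}: compose $\mathcal{H}\to K_2$ (the bipartition) with $K_2\hookrightarrow\mathcal{H}$ (any edge) to get $\mathcal{G}\to\mathcal{H}$ iff $\mathcal{G}\to K_2$, then test bipartiteness by BFS; you also correctly handle the edgeless degenerate case and membership in $NP$.

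For the $NP$-hardness direction, however, your proposal has a genuine gap that you yourself flag: the indicator and sub-indicator constructions are set up correctly (and the reduction $\mathcal{G}\mapsto\mathcal{G}^*$ with $\mathcal{G}^*\to\mathcal{H}$ iff $\mathcal{G}\to\mathcal{H}^*$ is sound, provided $I$ is connected and admits an automorphism exchanging $a$ and $b$ so that $\mathcal{H}^*$ is a well-defined undirected graph), but the entire content of the theorem is the claim that for \emph{every} non-bipartite $\mathcal{H}$ one can choose indicators and sub-indicators so that the derived target eventually retracts onto $K_3$. Your induction on the odd girth is the right skeleton, but the cases where the girth reduction stabilizes are precisely where the long combinatorial argument of Hell and Ne\v{s}et\v{r}il lives, and invoking their lemma ``as a black box'' at that point means the hard half of the theorem is assumed rather than proved. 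As a citation-level justification this is fine --- and is no less than what the paper itself does --- but as a self-contained proof it is incomplete exactly at the step that makes the theorem nontrivial.
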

There is an easy $\mathcal{H}$-colorability test when $\mathcal{H}$ is bipartite: 
\begin{lemmach}[\cite{10.1016/0095-8956(90)90132-J}]\label{L1} For all graphs $\mathcal{G},\mathcal{H}$ if $\mathcal{H}$ is bipartite, then $\mathcal{G}$ is $\mathcal{H}$-colorable if and only if $\mathcal{G}$ is bipartite graph. 
\end{lemmach}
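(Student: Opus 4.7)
The plan is to prove both directions of the biconditional by exploiting the well-known characterization of bipartiteness as the existence of a homomorphism to $K_2$ (equivalently, the absence of odd cycles). Both directions reduce to composing homomorphisms.

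For the forward direction, suppose $f\colon \mathcal{G}\to \mathcal{H}$ is a homomorphism witnessing that $\mathcal{G}$ is $\mathcal{H}$-colorable. Since $\mathcal{H}$ is bipartite, a fixed 2-coloring of $\mathcal{H}$ provides a homomorphism $g\colon \mathcal{H}\to K_2$. The composition $g\circ f\colon \mathcal{G}\to K_2$ is then a proper 2-coloring of $\mathcal{G}$, so $\mathcal{G}$ is bipartite. (Alternatively, one may argue by odd cycles: any homomorphism sends an odd closed walk in $\mathcal{G}$ to an odd closed walk in $\mathcal{H}$, which would force an odd cycle in $\mathcal{H}$, contradicting bipartiteness.)

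For the backward direction, assume $\mathcal{G}$ is bipartite with bipartition $(A,B)$. Pick any edge $\{u,v\}$ of $\mathcal{H}$; such an edge exists in every non-trivial bipartite $\mathcal{H}$ relevant to the $\mathcal{H}$-coloring problem. Define $\varphi\colon V(\mathcal{G})\to V(\mathcal{H})$ by $\varphi(x)=u$ for $x\in A$ and $\varphi(x)=v$ for $x\in B$. Every edge of $\mathcal{G}$ has one endpoint in $A$ and one in $B$, so it is mapped to the edge $\{u,v\}$ of $\mathcal{H}$; thus $\varphi$ is a homomorphism and $\mathcal{G}$ is $\mathcal{H}$-colorable.

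The argument is essentially immediate from the functoriality of homomorphisms, so there is no genuine obstacle. The only subtle point worth flagging is the degenerate case where $\mathcal{H}$ has no edges at all; then only edgeless $\mathcal{G}$ admit a homomorphism to $\mathcal{H}$, while arbitrary bipartite $\mathcal{G}$ may have edges, so one should note this case is excluded (or that one works with bipartite $\mathcal{H}$ containing at least one edge, which is the content relevant to the Dichotomy Theorem).
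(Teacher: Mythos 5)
Your proof is correct, and the forward direction follows exactly the route the paper later formalizes in $V^0$ (Lemmas \ref{HOMTRAMS}, \ref{MTL2}, \ref{MTL3} and Theorem \ref{MTT1}): compose the given homomorphism $\mathcal{G}\to\mathcal{H}$ with a $2$-coloring homomorphism $\mathcal{H}\to K_2$ and invoke transitivity of homomorphisms. The paper itself only cites this lemma from Hell and Ne\v{s}et\v{r}il without giving a proof, and your caveat about edgeless $\mathcal{H}$ is a legitimate observation: the backward direction genuinely requires $\mathcal{H}$ to contain at least one edge, which is the only case relevant to the dichotomy.
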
 
Instances of CSP($\mathcal{H}$) can be expressed by propositional formulas: denote by $\alpha(\mathcal{G},\mathcal{H})$ the propositional formula expressing that there is a homomorphism from $\mathcal{G}$ to $\mathcal{H}$ (see Definition \ref{DefinitionCNF}). If the instance of CSP is unsatisfiable, then $\neg\alpha(\mathcal{G},\mathcal{H})$ is a tautology (for the $\mathcal{H}$-coloring problem we get a tautology every time we consider bipartite graph $\mathcal{H}$ and non-bipartite graph $\mathcal{G}$). From this point of view it is natural to ask about its proof complexity. Common way to do this is to formalize the sentence in some weak theory of bounded arithmetic and first prove that this universal statement is valid in all finite structures. Then it could be translated into a family of propositional tautologies, that will have short proofs in the corresponding proof system. The simpler the theory is, the weaker propositional proof system will be.

If $\mathcal{H}$-coloring is $NP$-complete then the negative instances (graphs $\mathcal{G}$ that cannot be $\mathcal{H}$-colored) form a $coNP$-complete set and hence, unless $NP = coNP$, they cannot have poly-size proofs in any
propositional proof system. In the case when $\mathcal{H}$-coloring is tractable (i.e. we have a $p$-time algorithm distinguishing positive and negative instances) we shall prove that the negative instances, when represented by unsatisfiable sets of clauses, actually have $p$-size resolution refutations. A resolution proof is a much more
rudimentary object than a run of a $p$-time algorithm: it operates just with clauses. (In fact, the algorithm can be reconstructed from the proof via feasible interpolation, Sec.3.3.2.)

In this paper we show, that the decision algorithm in the $p$-time case of the $\mathcal{H}$-coloring problem (that is, the case where $\mathcal{H}$ is a bipartite graph) can be formalized in a relatively weak two-sorted theory $V^0$ \cite{10.5555/1734064}, which is quite convenient  for formalizing sets of vertices and relations between them, and proved by using only formulas of restricted complexity in the Induction scheme. The tautologies expressing the negative instances for such $\mathcal{H}$ hence have short proofs in propositional proof system $R^*(log)$, a mild extension of resolution. In fact, when the formulas are expressed as unsatisfiable sets of clauses they have $p$-size resolution proofs.

We shall complement this upper bound by a lower bound, by giving examples of graphs $\mathcal{H}$ and $\mathcal{G}$ for which CSP($\mathcal{H}$) is $NP$-complete and for which any proof of the tautologies expressing that $\mathcal{G} \notin$ CSP($\mathcal{H}$) must have exponential size length in constant-depth Frege system (which contains $R^*(log)$) and some other well-known proof systems. This is based on the proof complexity of the Pigeonhole Principle.

The paper is organized as follows. In Section 2 we give some common definitions from propositional proof complexity and theory of bounded arithmetic, the definition of CSP in terms of homomorphisms and explain how to express instances of CSP by propositional formulas. In Section 3 we formalize the  $\mathcal{H}$-coloring problem in theory $V^0$ and prove all auxiliary lemmas and the main universal statement. Then we proceed with translation of the main universal statement into propositional tautologies and prove that for any non-bipartite graph $\mathcal{G}$ and bipartite graph $\mathcal{H}$ the propositional family, expressing that there is no homomorphism from $\mathcal{G}$ to $\mathcal{H}$, has polynomial size bounded depth Frege proofs. Some definitions and material here about translations are quite standard in proof complexity but maybe not so in the CSP community, hence we decided to include them explicitly. We end the Section with some remarks about collateral result and minor improvement of the upper bound. 
In Section 4 we consider $NP$-complete case of the $\mathcal{H}$-coloring problem and known lower bounds for one suitable example. In Section 5, we discuss open questions and further direction of research.

\section{Preliminaries}
\subsection{Constraint satisfaction problems and the $\mathcal{H}$-coloring problem}
There are many equivalent definitions of the constraint satisfaction problem. Here we will use the definition in terms of homomorphisms.

\begin{definition}[Constraint satisfaction problem] $\,$

\begin{itemize}
\item A \emph{vocabulary} is a finite set of relational symbols $R_1$,..., $R_n$ each of which has a fixed arity.
\item A \emph{relational structure} over the vocabulary $R_1$,..., $R_n$ is the tuple $\mathcal{H} = (H, R^\mathcal{H}_1,...,R^\mathcal{H}_n)$ s.t. $H$ is non-empty set , called the \emph{universe} of $\mathcal{H}$, and each $R^\mathcal{H}_i$ is a relation on $H$ having the same arity as the symbol $R_i$.
\item For $\mathcal{G}$, $\mathcal{H}$ being relational structures over the same vocabulary $R_1$,..., $R_n$ a homomorphism from  $\mathcal{G}$ to $\mathcal{H}$ is a mapping $\phi: \mathcal{G} \rightarrow \mathcal{H}$ from the universe $G$ to $H$ s.t., for every $m$-ary relation $R^\mathcal{G}$ and every tuple $(a_1,...,a_m) \in R^\mathcal{G}$ we have $(\phi(a_1),...,\phi(a_m)) \in R^\mathcal{H}$.
\end{itemize}
Let $\mathcal{H}$ be a relational structure over a vocabulary $R_1$,..., $R_n$. In the \emph{constraint satisfaction problem} associated with $\mathcal{H}$, denoted by CSP($\mathcal{H}$) the question is, given a structure $\mathcal{G}$ over the same vocabulary, whether there exists a homomorphism from $\mathcal{G}$ to $\mathcal{H}$. If the answer is positive, then we call the instance $\mathcal{G}$ \emph{satisfiable} and \emph{unsatisfiable} otherwise \cite{BULATOV200531}.
\end{definition}

The $\mathcal{H}$-coloring problem could be described as follows: let $\mathcal{H} = (V_\mathcal{H},E_\mathcal{H})$ be a simple undirected graph without loops, whose vertices we consider as different colors. An $\mathcal{H}$-coloring of a graph $\mathcal{G} = (V_\mathcal{G},E_\mathcal{G})$ is an assignment of colors to the vertices of $\mathcal{G}$ such that adjacent vertices of $\mathcal{G}$ obtain adjacent colors. Since a graph homomorphism $h: \mathcal{G} \to \mathcal{H}$ is a mapping of $V_\mathcal{G}$ to $V_\mathcal{H}$ such that if $g,g'$ are adjacent vertices of $\mathcal{G}$, then so are $h(g),h(g')$, it is easy to see that an $\mathcal{H}$-coloring of $\mathcal{G}$ is just a homomorphism $\mathcal{G} \to \mathcal{H}$. A simple undirected graph $\mathcal{H}$ can be considered as a relational structure $\mathcal{H} = (V_\mathcal{H},E_\mathcal{H})$ with only one binary symmetric relation $E_\mathcal{H}(i,j)$ (to $i,j$ be adjacent vertices). Thus, the problem of $\mathcal{H}$-coloring of a graph $\mathcal{G}$ is equivalent to CSP($\mathcal{H}$).

To express an instance of CSP($\mathcal{H}$) by propositional formula we use the following construction \cite{atserias2017proof}. For any sets $V_\mathcal{G}$ and $V_\mathcal{H}$ by $V(V_\mathcal{G}, V_\mathcal{H})$ we denote a set of propositional variables: for every $v \in V_\mathcal{G}$ and every $u\in V_\mathcal{H}$ there is a variable $x_{v,u}$ in the set $V(V_\mathcal{G}, V_\mathcal{H})$. A variable $x_{v,u}$ is assigned the truth value $1$ if and only if the vertex $v$ is mapped to vertex $u$. To every graph $\mathcal{G}=(V_\mathcal{G},E_\mathcal{G})$ we assign a set of clauses $CNF(\mathcal{G},\mathcal{H})$ over the variables in $V(V_\mathcal{G}, V_\mathcal{H})$ in such a way that there is a one-to-one correspondence between the truth valuations of the variables in $V(V_\mathcal{G}, V_\mathcal{H}$) satisfying this set and the homomorphisms from $\mathcal{G}$ to $\mathcal{H}$:

\begin{definition}\label{DefinitionCNF} For any two graphs $\mathcal{G}=(V_\mathcal{G},E_\mathcal{G})$, $\mathcal{H}=(V_\mathcal{H},E_\mathcal{H})$ by $CNF(\mathcal{G},\mathcal{H})$ we denote the following set of clauses: 

\begin{itemize}
    \item a clause $\bigvee_{u\in V_\mathcal{H}}^{} x_{v,u}$ for each $v\in V_\mathcal{G}$;
    \item a clause $\neg x_{v,u_1}\vee\neg x_{v,u_2}$ for each $v\in V_\mathcal{G}$ and $u_1,u_2 \in V_\mathcal{H}$ with $u_1\neq u_2$;
    \item a clause $\neg x_{v_1,u_1}\vee \neg x_{v_2,u_2}$ for every adjacent vertices $v_1,v_2\in V_\mathcal{G}$ and non-adjacent vertices $u_1,u_2 \in V_\mathcal{H}$.
\end{itemize}
It is easy to see that if we exchange the last item with more general definition:
\begin{itemize}
\item a clause $\bigvee_{i\in [r]}^{} \neg x_{v_i,u_i}$ for each natural number $r$, each relation $R$ of arity $r$, each $(v_1,v_2,...,v_r)\in R^\mathcal{G}$, and each $u_1.u_2,...,u_r\notin R^\mathcal{H}$,
\end{itemize}
we get the set of clauses $CNF(\mathcal{G},\mathcal{H})$ for common CSP on any relational structure. 
\end{definition}

\subsection{Bounded Arithmetic}
Some definitions, examples and results are adapted from \cite{10.5555/1734064}. In our work we use \emph{two-sorted first-order} (sometimes called second-order) set-up as a framework for the theory. Here there are two kinds of variables: the variables $x,y,z,...$ of the first sort  are called \emph{number variables} and range over the natural numbers, and the variables $X,Y,Z,...$ of the second sort are called \emph{set (or also strings) variables} and range over finite subsets of natural numbers (which represent binary strings). Functions and predicate symbols may involve both sorts and there are two kinds of functions: the number-valued functions (or just \emph{number functions}) and the string-valued functions (or just \emph{string functions}).

The usual language of arithmetic for two-sorted first-order theories is the extension of standard language for Peano Arithmetic $\mathcal{L_{PA}}$. 
\begin{definition}[$\mathcal{L}^2\mathcal{_{PA}}$]
$\mathcal{L}^2\mathcal{_{PA}} = \{0,1,+,\cdot,\lvert\,    \rvert;=_1,=_2,\leq, \in\}$ 
\end{definition}
Here the symbols $0,1,+,\cdot,=_1$ and $\leq$ are well-known and are from $\mathcal{L_{PA}}$: they are function and predicate symbols over the first sort. The function $\lvert X\rvert$ (\emph{the length of $X$}) is a number-valued function and is intended to denote the least upper bound of the set $X$ (the length of the corresponding string). The binary predicate $\in$ for a number and a set denotes set membership, and $=_2$ is the equality predicate for sets. The defining properties of all symbols from language $\mathcal{L}^2\mathcal{_{PA}}$ are described by a set of basic axioms denoted as 2-$BASIC$ \cite{10.5555/1734064}, which we do not present here.
\begin{notation}
We will use the abbreviation:
$$
X(t) =_{def} t \in X
$$
where $t$ is a number term. Thus we think of $X(i)$ as the $i$-th bit of binary string $X$ of length $\lvert X\rvert$.
\end{notation}

To define the theory $V^0$, in which we will formalize the $\mathcal{H}$-coloring problem, we need the following definitions: 

\begin{definition}[Bounded formulas]
Let $\mathcal{L}$ be a two-sorted vocabulary. If $x$ is a number variable, $X$ is a string variable that do not occur in the $\mathcal{L}$-number term $t$, then $\exists x \leq t \phi$ stands for $\exists x( x \leq t \wedge \phi)$, $\forall x \leq t \phi$ stands for $\forall x(x \leq t \to \phi)$, $\exists X \leq t\phi$ stands for $\exists X(|X|\leq t \wedge \phi)$ and $\forall X\leq t \phi$ stands for $\forall X(|X|\leq t \to \phi)$. Quantifiers that occur in this form are said to be \emph{bounded}, and a \emph{bounded formula} is one in which every quantifier is bounded.
\end{definition}

\begin{definition}[$\Sigma_i^{B}$ and $\Pi_i^{B}$ formulas in $\mathcal{L}^2\mathcal{_{PA}}$]
We will define $\Sigma_i^{B}$ and $\Pi_i^{B}$ formulas recursively as follows:

\begin{itemize}
    \item $\Sigma_0^{B}=\Pi_0^{B}$ is the set of $\mathcal{L}^2\mathcal{_{PA}}$-formulas whose only quantifiers are bounded number quantifiers (there can be free string variables);
    \item For $i\geq 0$, $\Sigma_{i+1}^{B}$ (resp. $\Pi_{i+1}^{B}$) is the set of formulas of the form $\exists \bar{X}\leq \bar{t}\phi(\bar{X})$ (resp. $\forall \bar{X}\leq \bar{t}\phi(\bar{X})$), where $\phi$ is a $\Pi_i^{B}$ formula (resp. $\Sigma_i^{B}$ formula), and $\bar{t}$ is a sequence of $\mathcal{L}^2\mathcal{_{PA}}$-terms not involving any variable from $\bar{X}$.
    
\end{itemize}
\end{definition}

\begin{definition}[Comprehension Axiom]
If $\Phi$ is a set of formulas, then the \emph{comprehension axiom scheme} for $\Phi$, denoted by $\Phi$-$COMP$, is the set of formulas 
\begin{equation}
\exists X \leq y \forall z < y (X(z) \longleftrightarrow \phi(z))
\end{equation}
where $\phi(z)$ is any formula in $\Phi$, $X$ does not occur free in $\phi(z)$, and $\phi(z)$ may have free variables of both sorts, in addition to $z$.
\end{definition}

\begin{definition}[$V^0$] \emph{The theory $V^0$} has the vocabulary $\mathcal{L}^2\mathcal{_{PA}}$ and is axiomatized by $2$-$BASIC$ and $\Sigma^B_{0}$-$COMP$. 

\end{definition}
There is no explicit Induction axiom scheme in $V^0$, but it is known \cite{10.1145/800116.803756} that $V^0\vdash$ $\Sigma^B_{0}$-$IND$, where $\Phi$-$IND$ is:

\begin{definition}[Number Induction Axiom]
If $\Phi$ is a set of two-sorted formulas, then $\Phi$-$IND$ axioms are the formulas
\begin{equation}
(\phi(0)\wedge \forall x (\phi(x)\to \phi(x+1))) \to \forall z\phi(z)
\end{equation}
where $\phi$ is a formula in $\Phi$.
\end{definition}

\subsection{Propositional Proof Complexity}
In this section we define propositional proof systems $R$, $R(log)$ and their tree-like versions. Some definitions and results are adopted from \cite{krajicek_1995},\cite{krajicek2019proof}.

\begin{definition}[Propositional proof system, \cite{cook_reckhow_1979}]
A \emph{propositional proof system} is a polynomial time function $P$ whose range is set $TAUT$. For a tautology $\tau\in TAUT$, any string $w$ such that $P(w)=\tau$ is called a $P$-proof of $\tau$.
\end{definition}
Proof systems are usually defined by a finite number of inference rules of a particular form and the proof is created by applying them step by step. The complexity of proof is measured by its size and number of steps. 

The \emph{resolution system} $R$ operates with atoms and their negations and has no other logical connectives. The basic object is a \emph{clause}, a disjunction of a finite set of literals. The \emph{resolution rule} allows us to derive new clause $C_1\cup C_2$ from two clauses $C_1\cup \{p\}$ and $C_2\cup \{\neg p\}$:
\begin{equation}
\frac{C_1\cup \{p\}\,\,\,\,\,\, C_2\cup \{\neg p\}}{C_1\cup C_2}
\end{equation}
If we manage to derive the \emph{empty clause $\emptyset$} from the initial set of clauses $\mathcal{C}$, the clauses in the set $\mathcal{C}$ are not simultaneously satisfiable. Thus, the resolution system can be interpreted as a \emph{refutation proof system}: instead of proving that a formula is a tautology, it proves that a set of clauses $\mathcal{C}=\{C_1,C_2,...,C_n\}$ is not satisfiable, and therefore the formula $\alpha = \bigvee_{i=1}^{n}\neg C_i$ is a tautology. 

\begin{definition}[An $R$-proof]
Let $\mathcal{C}$ be a set of clauses, an \emph{$R$-refutation} of $\mathcal{C}$ is a sequence of clauses $D_1,...,D_k$ such that:
\begin{itemize}
    \item For each $i\leq k$, either $D_i\in \mathcal{C}$ or there are $u,v< i$ such that $D_i$ follows from $D_u,D_v$ by the resolution rule,
    \item $D_k = \emptyset$.
\end{itemize}
The number of steps in the refutation is $k$.
\end{definition}

The \emph{DNF-resolution} (denoted by DNF-$R$) is a proof system extending $R$ by allowing in clauses not only literals but their conjunctions as well \cite{krajicek2019proof}. DNF-$R$ has the following inference rules:
\begin{equation}
\frac{C\cup\{\bigwedge_{j} l_j\}\,\,\,\,\, D\cup\{\neg l_1',...,\neg l_{t}'\}}{C\cup D}
\end{equation}
if $t\geq 1$ and all $l_i'$ occur among $l_j$, and
\begin{equation}
\frac{C\cup\{\bigwedge_{j\leq s} l_j\}\,\,\,\,\, D \cup \{ \bigwedge_{s<j \leq t} l_j \}}{ C\cup D\cup \{\bigwedge_{i\leq s+t}l_i\}}.
\end{equation}
Notice, that the constant-depth Frege systems generalize the resolution and DNF-$R$ systems, which are depth one and depth two systems respectively.

Let $f:\mathbb{N}^{+}\to \mathbb{N}^{+}$ be a non-decreasing function. Define the $R(f)$-size of a DNF-$R$ refutation $\pi$ to be the minimum $s$ such that:
\begin{itemize}
    \item $\pi$ has at most $s$ steps (that is clauses), and
    \item every logical term occurring in $\pi$ has size at most $f(s)$.
\end{itemize}
Thus, a size $s$ $R(log)$-refutation may contain terms of the size up to $log(s)$.

\begin{definition}[Tree-like proof systems]
A proof is called \emph{tree-like} if every step of the proof is a part of the hypotheses of at most one inference in the proof (each line in the proof can be used only once as hypothesis for an inference rule). For a proof system $P$ by $P^{*}$ we denote the proof system whose proofs are exactly tree-like $P$-proofs, for example $R^*$ and $R^*(log)$.
\end{definition}
\begin{lemmach}[5.7.2 in \cite{krajicek2019proof}]\label{RPSIMRLOG}
$R$ $p$-simulates $R^*(log)$ with respect to refutations of sets of clauses.
\end{lemmach}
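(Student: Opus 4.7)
The plan is to transform a tree-like $R^*(log)$-refutation $\pi$ of a set of clauses $\mathcal{C}$ of size $s$ into a dag-like $R$-refutation of $\mathcal{C}$ of size polynomial in $s$. The first observation is that, since $\pi$ has size $s$, every term occurring in $\pi$ is a conjunction of at most $\log s$ literals; hence each term has at most $2^{\log s} = s$ distinct satisfying assignments on its own variables, a bound we will exploit to keep blow-ups polynomial.

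Next, I would process the tree of $\pi$ from the leaves (axioms of $\mathcal{C}$) toward the root (the empty clause), simulating each DNF-$R$ inference by a short ordinary-resolution derivation. The underlying semantic fact is that any DNF clause $D = t_1 \vee \cdots \vee t_m$ is equivalent to the conjunction, over all choice functions $\sigma$ picking one literal from each $t_i$, of the ordinary clauses $\bigvee_i l_{i,\sigma(i)}$. For the cut-on-term rule with premises $C \vee \bigwedge_j l_j$ and $D \vee \neg l_1' \vee \cdots \vee \neg l_t'$ (where $\{l_i'\} \subseteq \{l_j\}$), I would take the CNF-equivalents $C' \vee l_j$ of the first premise (with $C'$ a CNF-clause of $C$ derived by induction) and perform at most $t \leq \log s$ successive resolution steps against the second premise, eliminating $\neg l_1', \ldots, \neg l_t'$ one by one, to produce CNF-equivalents of $C \vee D$. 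The term-combining rule is handled analogously, deriving CNF-equivalents of the new conclusion from CNF-equivalents of the premises together with weakening steps.

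The main obstacle is that the full CNF expansion of a DNF clause with $m$ terms of size up to $\log s$ contains up to $(\log s)^m$ many ordinary clauses, which is super-polynomial whenever $m$ grows with $s$; naively generating every CNF-equivalent would already defeat the simulation. The tree-like structure of $\pi$ is what rescues the argument: each DNF clause of $\pi$ is used as a premise in at most one subsequent inference, so one only needs to derive those CNF-equivalents that this single inference actually consumes — at most $\log s$ of them per step of $\pi$. Telescoping this bookkeeping over the $s$ steps yields an $R$-derivation of total size $s^{O(1)}$. Since every tree-like $R$-refutation is in particular a dag-like $R$-refutation, this gives the desired polynomial-size $R$-refutation of $\mathcal{C}$, establishing the $p$-simulation.
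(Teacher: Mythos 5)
First, a point of reference: the paper does not prove this statement at all --- Lemma \ref{RPSIMRLOG} is imported verbatim from Lemma 5.7.2 of \cite{krajicek2019proof} --- so your proposal has to be judged against the standard argument there, not against anything in this text. Your local simulation of a single DNF-$R$ inference is correct: to obtain one CNF-instance $E_C\cup E_D$ of the conclusion $C\cup D$ of the cut rule, you resolve the instance $E_D\cup\{\neg l_1',\dots,\neg l_t'\}$ of the right premise successively against the $t$ instances $E_C\cup\{l_1'\},\dots,E_C\cup\{l_t'\}$ of the left premise.

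The genuine gap is in the global accounting. Tree-likeness guarantees that each line of $\pi$ is consumed by at most one later inference, but it does not bound how many CNF-instances that single inference demands: producing \emph{one} demanded instance of the conclusion of a cut requires up to $t\le\log s$ \emph{distinct} instances of the left premise, so if a node must supply $N$ instances to its parent, its left child may have to supply $N\cdot\log s$. The demand therefore propagates multiplicatively from the root towards the leaves, and a node at depth $d$ below the root may have to supply $(\log s)^{d}$ distinct clauses; since the tree can have depth up to $s$, ``telescoping over the $s$ steps'' gives $(\log s)^{s}$, not $s^{O(1)}$. Your phrase ``at most $\log s$ of them per step'' is justified only at the children of the root, where the demanded set of the parent has size $1$. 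This multiplicative blow-up of the naive CNF expansion is precisely what makes the lemma non-trivial, and the proof in \cite{krajicek2019proof} avoids it by a different route: the tree-like $R^*(log)$-refutation is read as a strategy for the falsified-clause search problem that queries conjunctions of at most $\log s$ literals (one query per inference decides which premise is falsified); each conjunction query is then unwound into a block of single-literal queries whose ``some literal is false'' exits all merge into one node --- it is exactly this merging that produces a dag rather than a tree, which is why the target system is dag-like $R$ and not $R^*$ --- and the resulting polynomial-size object is converted back into an $R$-refutation. To repair your argument you would have to show that the total number of distinct CNF-instances demanded across the whole tree is polynomial in $s$, and no such bound holds for the demand set as you have defined it.
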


We also introduce Definition \ref{DNF1}, which we will use at the end of Sec. 3.3: 
\begin{definition}[DNF$_1$-formula]\label{DNF1} A \emph{basic formula} is an atomic formula or the negation of an atomic formula. A \emph{DNF$_1$-formula} is a formula that is built from basic formulas by:
\begin{itemize}
    \item first apply any number of conjunctions and bounded universal quantifiers,
    \item then apply any number of disjunctions and bounded existential quantifiers.
\end{itemize}

\end{definition}\color{black}

\section{Formalization of the $\mathcal{H}$-coloring problem in $V^0$}
\subsection{Defining Relations}
In this section we define all the notions we need to formalize the decision algorithm in the $p$-time case of the $\mathcal{H}$-coloring problem, i.e. the notions of a graph, bipartite and non-bipartite graphs and a homomorphism between graphs, in the vocabulary $\mathcal{L}^2\mathcal{_{PA}}$ and using only basic axioms of $V^0$. To do this we extend our theory with new predicate and function symbols and for each of them we add defining axioms which ensure that they receive their standard interpretations in a model of $V^0$. 

\begin{definition}[Representable/Definable relations]
Let $\mathcal{L}\supseteq $ $\mathcal{L}^2\mathcal{_{PA}}$ be a two-sorted vocabulary, and let $\phi$ be a $\mathcal{L}$-formula. Then we say that $\phi(\bar{x},\bar{X})$ represents (or defines) a relation $R(\bar{x},\bar{X})$ if 
\begin{equation}
R(\bar{x},\bar{X}) \longleftrightarrow \phi(\bar{x},\bar{X}).
\end{equation}
If $\Phi$ is a set of $\mathcal{L}$-formulas, then we say that $R(\bar{x},\bar{X})$ is \emph{$\Phi$-representable} (or $\Phi$-definable) if it is represented by some $\phi \in \Phi$.
\end{definition}

\begin{definition}[Definable number functions]
Let $T$ be a theory with two-sorted vocabulary $\mathcal{L}\supseteq $ $\mathcal{L}^2\mathcal{_{PA}}$, and let $\Phi$ be a set of $\mathcal{L}$-formulas. A number function $f$ is \emph{$\Phi$-definable} in $T$ if there is a formula $\phi(\bar{x},y,\bar{X})$ in $\Phi$ such that 
\begin{equation}\label{EQDEFINABILITYOFFUNCTION}
T \vdash \forall\bar{x}\forall\bar{X}\,\exists !y\, \phi(\bar{x},y,\bar{X})
\end{equation}
and 
\begin{equation}
y = f(\bar{x},\bar{X}) \longleftrightarrow \phi(\bar{x},y,\bar{X}).
\end{equation}
\end{definition}

Auxiliary predicate and function symbols, which we will use further to define different notions in $V^0$, are the following: 

\begin{definition}[Divisibility] The relation of \emph{divisibility} is defined by: 
\begin{equation}
x|y \longleftrightarrow \exists z \leq y (x\cdot z = y).
\end{equation}
\end{definition}

\begin{definition}[Pairing function]
If $x,y \in \mathbb{N}$ we define the \emph{pairing function} $\langle x,y\rangle$ to be the following term in $V^0$:

\begin{equation}
\langle x,y\rangle = (x + y)(x + y + 1) + 2y
\end{equation}
Since the formula for pairing function is just a term in standard vocabulary for the theory $V^0$, it is obvious that $V^0$ proves the condition  (\ref{EQDEFINABILITYOFFUNCTION}). It is also easy to prove in $V^0$ that pairing function is a one-one function, that is:

\begin{equation}
V^0 \vdash \forall x_1,x_2,y_1,y_2 \ \langle x_1,y_1\rangle = \langle x_2,y_2\rangle \to x_1=x_2 \wedge y_1=y_2
\end{equation}
\end{definition}
Using pairing function we can code pair of numbers $x,y$ by one number $\langle x,y\rangle$, and the sequence of pairs by a subset of numbers. To define a graph on $n$ vertices, consider a string $V_\mathcal{G}$ where $|V_\mathcal{G}|=n$ and $\forall i < n$ $V_\mathcal{G}(i)$. We say that $V_\mathcal{G}$ is the set of $n$ vertices of graph $\mathcal{G}$. Then we define string $E_\mathcal{G}$ of length $|E_\mathcal{G}| < 4n^2$ to be the set of edges of the graph $\mathcal{G}$ as following: if there is an edge between vertices $i,j$ then, using the pairing function, set $E_\mathcal{G}(\langle i,j\rangle)$ and $\neg E_\mathcal{G}(\langle i,j\rangle)$ otherwise. 

\begin{notation}
Instead of $E_\mathcal{G}(\langle i,j\rangle)$ we will write just $E_\mathcal{G}(i,j)$ to denote that there is an edge between $i$ and $j$, and sometimes instead of $(V_\mathcal{G},E_\mathcal{G})$ we will write $\mathcal{G}$.
\end{notation}

\begin{definition}[Undirected graph $\mathcal{G}$ without loops] A pair of sets $\mathcal{G}=(V_\mathcal{G},E_\mathcal{G})$ with $|V_\mathcal{G}|=n$ denotes an undirected graph without loops if it satisfies the following relation:
\begin{equation}
\label{DefinitionUndirectedGraph}
 \begin{split}
&GRAPH(V_\mathcal{G},E_\mathcal{G}) \longleftrightarrow 
 \forall i <n(V_\mathcal{G}(i))\wedge \forall i<j<n \\&\,\,\,\,\,\,\,\,\,\, (E_\mathcal{G}(i,j)\longleftrightarrow E_\mathcal{G}(j,i)) \wedge \forall i<n \,\neg(E_\mathcal{G}(i,i))
 \end{split}
\end{equation}

\end{definition}
Further, talking about graphs we will consider only pairs of strings $\mathcal{G}=(V_\mathcal{G},E_\mathcal{G})$ that satisfy the above relation. Since we formalize the $\mathcal{H}$-coloring problem we need to define the homomorphism on graphs in the vocabulary $\mathcal{L}^2\mathcal{_{PA}}$. Consider two graphs $\mathcal{G}=(V_\mathcal{G},E_\mathcal{G})$ and $\mathcal{H}=(V_\mathcal{H},E_\mathcal{H})$, where $|V_\mathcal{G}|=n$, $|V_\mathcal{H}|=m$. Firstly we define a map between two sets of vertices $V_\mathcal{G},V_\mathcal{H}$, that is between sets $[0,n-1]$ and $[0,m-1]$. We again use the pairing function: consider a set $Z <$ $\langle n-1,m-1\rangle +1$, where $Z(\langle i,j\rangle)$ means that $i$-th vertex is mapped to $j$-th vertex. For $Z$ to be a well-defined map it should satisfy the following $\Sigma_0^{B}$-definable relation $MAP(n,m,Z)$:

\begin{definition}[Map between two sets] We say that a set $Z$ is a \emph{well-defined map} between two sets $[0,n-1]$ and $[0,m-1]$ if it satisfies the relation:
\begin{equation}
 \begin{split}
&\,\,\,\,\,MAP(n,m,Z) \longleftrightarrow \forall i<n \exists j<m\, Z(\langle i,j\rangle)\wedge \\& 
\forall i<n \forall j_1,j_2 < m (Z(\langle i,j_1\rangle)\wedge Z(\langle i,j_2\rangle) \to j_1=j_2)
 \end{split}
\end{equation}
\end{definition}

Now we can formalize the standard notion of existence of a homomorphism between two graphs $\mathcal{G}$ and $\mathcal{H}$ (here the homomorphism is formalized by a set $Z$ with certain properties):
\begin{definition}[The existence of a homomorphism between graphs $\mathcal{G}$ and $\mathcal{H}$] There is a \emph{homomorphism between two graphs $\mathcal{G}=(V_\mathcal{G},E_\mathcal{G})$ and $\mathcal{H}=(V_\mathcal{H},E_\mathcal{H})$} with $\lvert V_\mathcal{G}\rvert = n$, $\lvert V_\mathcal{H}\rvert = m$, if they satisfy the relation:
\begin{equation}
 \begin{split}
&HOM(\mathcal{G},\mathcal{H}) \longleftrightarrow \exists Z \leq \langle n-1,m-1\rangle \big(MAP(n,m,Z)\wedge \\
&\hspace{80pt} \forall i_1,i_2<n, \forall j_1,j_2 < m \\&\hspace{20pt} (E_\mathcal{G}(i_1,i_2)\wedge Z(\langle i_1,j_1\rangle) \wedge Z(\langle i_2,j_2\rangle) \to E_\mathcal{H}(j_1,j_2))\big) 
 \end{split}
\end{equation}
\end{definition}
Note that the relation $HOM(\mathcal{G},\mathcal{H})$ is a $\Sigma^B_1$-definable relation. 

Finally, we need to formalize what does it mean to be a bipartite or a non-bipartite graph. The notion of being bipartite is $\Sigma_1^{B}$-definable in $\mathcal{L}^2\mathcal{_{PA}}$:
\begin{definition}[Bipartite graph $\mathcal{H}$] A graph $\mathcal{H}=(V_\mathcal{H},E_\mathcal{H})$ with $\lvert V_\mathcal{H}\rvert = m$ is \emph{bipartite} if it satisfies the relation:
\begin{equation}
\label{DefinitionBipartite}
 \begin{split}
&\,\,BIP(\mathcal{H})\longleftrightarrow \exists W_\mathcal{H},U_\mathcal{H} \leq m \big(\forall i<m (W_\mathcal{H}(i)\leftrightarrow \neg U_\mathcal{H}(i))\wedge\\
&\forall i<j<m (E_\mathcal{H}(i,j) \to (W_\mathcal{H}(i)\wedge U_\mathcal{H}(j))\vee(W_\mathcal{H}(j)\wedge U_\mathcal{H}(i)))\big)
 \end{split}
\end{equation}
\end{definition}

To define a non-bipartite graph we use a commonly-known characterization of non-bipartite graphs (to contain an odd cycle, or, more generally, to allow a homomoprhism from an odd cycle). The reason here is to get a $\Sigma^B_1$-definable relation for a non-bipartite graph. This makes the formula in the main statement from the next section be $\Pi^B_1$, and hence translatable into propositional logic. First we define a cycle.

\begin{definition}[Cycle $\mathcal{C}_k$]\label{CYCLE} A graph $\mathcal{C}_k=(V_{\mathcal{C}_k},E_{\mathcal{C}_k})$ with $V_{\mathcal{C}_k}=\{0,1,...,k-1\}$ is a \emph{cycle of length $k$} if it satisfies the relation: 
\begin{equation}
 \begin{split}
& CYCLE(\mathcal{C}_k) \longleftrightarrow E_{\mathcal{C}_k}(0,k-1) \wedge \forall i<(k-1)\, E_{\mathcal{C}_k}(i,i+1) \wedge\\ &\hspace{40pt}\forall i,j<(k-1) (j\neq i+1 \to \neg E_{\mathcal{C}_k}(i,j))
 \end{split}
\end{equation}
\end{definition}

\begin{definition}[Non-bipartite graph $\mathcal{G}$]\label{DNBG} A graph $\mathcal{G}=(V_\mathcal{G},E_\mathcal{G})$ with $\lvert V_\mathcal{G}\rvert=n$ is \emph{non-bipartite} if it satisfies the following $\Sigma^B_1$-definable relation: 
\begin{equation}
 \begin{split}
&NONBIP(\mathcal{G}) \longleftrightarrow \exists k\leq n (2|(k-1)) \exists V_{\mathcal{C}_k} = k,\exists E_{\mathcal{C}_k} <4k^2 \\ &\hspace{35pt}CYCLE(V_{\mathcal{C}_k},E_{\mathcal{C}_k})\wedge HOM(\mathcal{C}_k, \mathcal{G})
 \end{split}
\end{equation}
\end{definition}

\subsection{Proving in the theory $V^0$}

\begin{lemmach}[Homomorphism transitivity]\label{HOMTRAMS} For all graphs $\mathcal{G},\mathcal{H},\mathcal{S}$
\begin{equation}
V^0\vdash\forall \mathcal{G}, \mathcal{H}, \mathcal{S} \,( HOM(\mathcal{G},\mathcal{H})\wedge HOM(\mathcal{H},\mathcal{S})  \to  HOM(\mathcal{G},\mathcal{S}))
\end{equation}
\end{lemmach}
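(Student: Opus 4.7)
The plan is to give a direct composition argument, with the composed map constructed by $\Sigma^B_0$-COMP so that the proof stays within $V^0$. Fix $\mathcal{G},\mathcal{H},\mathcal{S}$ with $|V_\mathcal{G}|=n$, $|V_\mathcal{H}|=m$, $|V_\mathcal{S}|=p$, and assume $HOM(\mathcal{G},\mathcal{H})$ and $HOM(\mathcal{H},\mathcal{S})$. Unfolding the definition, there exist witnesses $Z_1\leq\langle n-1,m-1\rangle$ and $Z_2\leq\langle m-1,p-1\rangle$ satisfying $MAP(n,m,Z_1)$, $MAP(m,p,Z_2)$, and the respective edge-preservation clauses. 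I would then apply $\Sigma^B_0$-COMP to the formula
\begin{equation}
\psi(w) \equiv \exists i<n\,\exists k<p\,\exists j<m\,\bigl(w=\langle i,k\rangle \wedge Z_1(\langle i,j\rangle)\wedge Z_2(\langle j,k\rangle)\bigr),
\end{equation}
which is $\Sigma^B_0$ because all quantifiers are bounded number quantifiers, to obtain a set $Z_3$ of length at most $\langle n-1,p-1\rangle+1$ with $Z_3(\langle i,k\rangle)\leftrightarrow \exists j<m(Z_1(\langle i,j\rangle)\wedge Z_2(\langle j,k\rangle))$.

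Next I would verify $MAP(n,p,Z_3)$. For totality: fix $i<n$; $MAP(n,m,Z_1)$ yields $j<m$ with $Z_1(\langle i,j\rangle)$, and then $MAP(m,p,Z_2)$ yields $k<p$ with $Z_2(\langle j,k\rangle)$, so $Z_3(\langle i,k\rangle)$ holds by the defining equivalence. For functionality: if $Z_3(\langle i,k_1\rangle)$ and $Z_3(\langle i,k_2\rangle)$, pick the corresponding $j_1,j_2<m$; the uniqueness clause of $MAP(n,m,Z_1)$ forces $j_1=j_2$, and then the uniqueness clause of $MAP(m,p,Z_2)$ forces $k_1=k_2$. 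All of this is purely propositional reasoning from the definitions, requiring no induction.

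For the homomorphism condition, suppose $E_\mathcal{G}(i_1,i_2)$, $Z_3(\langle i_1,k_1\rangle)$, and $Z_3(\langle i_2,k_2\rangle)$. Unwinding the defining equivalence of $Z_3$, pick $j_1,j_2<m$ with $Z_1(\langle i_1,j_1\rangle)$, $Z_2(\langle j_1,k_1\rangle)$, $Z_1(\langle i_2,j_2\rangle)$, $Z_2(\langle j_2,k_2\rangle)$. The edge-preservation clause for $Z_1$ then gives $E_\mathcal{H}(j_1,j_2)$, and the edge-preservation clause for $Z_2$ gives $E_\mathcal{S}(k_1,k_2)$, as required. Hence $Z_3$ witnesses $HOM(\mathcal{G},\mathcal{S})$.

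The only subtlety to check carefully is the complexity of the comprehension formula used to build $Z_3$: it must stay $\Sigma^B_0$. Since we only quantify over bounded numbers (no new string quantifiers inside), this is immediate, and no instance of $\Sigma^B_0$-IND is even needed. Thus the entire derivation goes through in $V^0$.
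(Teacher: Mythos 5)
Your proposal is correct and follows essentially the same route as the paper: construct the composite witness $Z''(\langle i,k\rangle)\leftrightarrow\exists j<m(Z(\langle i,j\rangle)\wedge Z'(\langle j,k\rangle))$ by $\Sigma^B_0$-COMP and check it satisfies $MAP$ and edge preservation. You merely spell out the verification that the paper leaves as ``easy to check,'' so there is nothing further to add.
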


\begin{proof}
Consider graphs $\mathcal{G}=(V_\mathcal{G},E_\mathcal{G}))$, $\mathcal{H}=(V_\mathcal{H},E_\mathcal{H})$ and $\mathcal{S}=(V_\mathcal{S},E_\mathcal{S})$, where $|V_\mathcal{G}|=n$, $|V_\mathcal{H}|=m$ and $|V_\mathcal{S}|=t$. Since $HOM(\mathcal{G},\mathcal{H})$ and $HOM(\mathcal{H},\mathcal{S})$, there exist two sets $Z \leq$ $\langle n-1,m-1\rangle$ and $Z' \leq$ $\langle m-1,t-1\rangle$ which satisfy the homomorphism definition. We need to prove that there exists a set $Z'' \leq$ $\langle n-1,t-1\rangle$, such that: 
\begin{align*}
&\hspace{20pt}MAP(n,t,Z'') \wedge \forall i_1,i_2 < n, \forall k_1,k_2< t \\
&(E_\mathcal{G}(i_1,i_2) \wedge Z''(\langle i_1,k_1\rangle)\wedge Z''(\langle i_2,k_2\rangle) \to E_\mathcal{S}(k_1,k_2))
\end{align*}
Consider the set $Z'' \leq$ $\langle n-1,t-1\rangle$ which we define by the formula: 
\begin{equation}
Z''(\langle i,k\rangle) \longleftrightarrow \exists j<m(Z(\langle i,j\rangle)\wedge Z'(\langle j,k\rangle)).
\end{equation}
This set should exist due to Comprehension Axiom $\Sigma_0^{B}$-$COMP$, since the formula $\phi(\langle i,k\rangle) = \exists j<m$ $(Z(\langle i,j\rangle)\wedge Z'(\langle j,k\rangle)) \in \Sigma_0^{B}$. It is easy to check that the set $Z''$ satisfies the homomorphism relation between graphs $\mathcal{G}$ and $\mathcal{S}$.
\end{proof}

\begin{notation}
$K_2$ will denote the complete graph on two vertices.
\end{notation}
In the following two lemmas we prove that there is always a homomorphism from a bipartite graph to $K_2$ and there is no homomorphism from a non-bipartite graph to $K_2$.

\begin{lemmach}\label{MTL2} For all bipartite graphs $\mathcal{H}$, $V^0$ proves the existence of a homomorphism from $\mathcal{H}$ to $\mathcal{K}_2$:
\begin{equation}
V^0\vdash\forall \mathcal{H}\, (BIP(\mathcal{H})\to  HOM(\mathcal{H},\mathcal{K}_2))
\end{equation}
\end{lemmach}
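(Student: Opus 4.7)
The plan is to unpack the witnesses provided by $BIP(\mathcal{H})$ and use them to explicitly build a homomorphism $Z$ to $K_2$. First, I will fix a bipartite graph $\mathcal{H}=(V_\mathcal{H}, E_\mathcal{H})$ with $|V_\mathcal{H}|=m$. By the definition of $BIP(\mathcal{H})$, there exist strings $W_\mathcal{H}, U_\mathcal{H} \leq m$ such that for every $i<m$ we have $W_\mathcal{H}(i) \leftrightarrow \neg U_\mathcal{H}(i)$, and every edge $E_\mathcal{H}(i,j)$ has one endpoint in $W_\mathcal{H}$ and the other in $U_\mathcal{H}$. I identify the two vertices of $K_2$ with $\{0,1\}$, so the edge relation $E_{K_2}(j_1,j_2)$ just says $j_1 \neq j_2$ and both $j_1, j_2 < 2$.

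Next I would use $\Sigma_0^B$-$COMP$ to introduce the candidate homomorphism $Z \leq \langle m-1, 1\rangle$ defined by
\begin{equation}
Z(\langle i,j\rangle) \longleftrightarrow (j=0 \wedge W_\mathcal{H}(i)) \vee (j=1 \wedge U_\mathcal{H}(i)).
\end{equation}
The defining formula is $\Sigma_0^B$ in $W_\mathcal{H}, U_\mathcal{H}$, so the set $Z$ exists in $V^0$. It remains to verify the two conjuncts in the definition of $HOM(\mathcal{H}, K_2)$.

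For $MAP(m,2,Z)$: totality follows from $W_\mathcal{H}(i) \leftrightarrow \neg U_\mathcal{H}(i)$, which gives for each $i<m$ at least one $j<2$ with $Z(\langle i,j\rangle)$; functionality follows because $W_\mathcal{H}(i)$ and $U_\mathcal{H}(i)$ cannot hold simultaneously, so the values $j=0$ and $j=1$ are mutually exclusive. Both arguments are $\Sigma_0^B$ reasoning, so they go through in $V^0$. For edge preservation, suppose $E_\mathcal{H}(i_1,i_2)$, $Z(\langle i_1,j_1\rangle)$ and $Z(\langle i_2,j_2\rangle)$. The bipartiteness clause forces $(W_\mathcal{H}(i_1)\wedge U_\mathcal{H}(i_2))$ or $(W_\mathcal{H}(i_2)\wedge U_\mathcal{H}(i_1))$, which combined with the definition of $Z$ forces $(j_1,j_2)=(0,1)$ or $(j_1,j_2)=(1,0)$. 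In either case $E_{K_2}(j_1,j_2)$ holds.

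There is no real obstacle here: the proof is essentially the standard observation ``map the two sides of the partition to the two vertices of $K_2$'', and the only thing to check is that the construction and verifications stay inside $\Sigma_0^B$ so that $\Sigma_0^B$-$COMP$ and $\Sigma_0^B$-$IND$ suffice. The minor care-point is bookkeeping the symmetry case in the definition of $BIP$ — the disjunction $(W_\mathcal{H}(i)\wedge U_\mathcal{H}(j))\vee(W_\mathcal{H}(j)\wedge U_\mathcal{H}(i))$ — but this is handled by a single case split which is visibly $\Sigma_0^B$.
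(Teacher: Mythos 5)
Your proposal is correct and matches the paper's own proof essentially line for line: the same explicit definition of $Z$ from $W_\mathcal{H}, U_\mathcal{H}$ via $\Sigma_0^B$-$COMP$, the same verification of $MAP$ from the partition condition, and the same case split on the bipartiteness clause for edge preservation. Your write-up is if anything slightly more explicit about why functionality and totality of $Z$ hold.
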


\begin{proof} Consider a bipartite graph $\mathcal{H}=(V_\mathcal{H},E_\mathcal{H})$ with $|V_\mathcal{H}|=n$. We need to show that there exists a homomorphism from $\mathcal{H}$ to $\mathcal{K}_2$, that is an appropriate set $Z \leq$ $\langle n-1,2\rangle$. Since $\mathcal{H}$ is bipartite, then there exist two subsets $W_\mathcal{H}$ and $U_\mathcal{H}$, such that $(W_\mathcal{H}(i)\leftrightarrow \neg U_\mathcal{H}(i))$. Consider a set $Z \leq$ $\langle n-1,2\rangle$, such that:
$$ 
\left\{ 
\begin{array}{rcl} 
Z(\langle i,0\rangle)\longleftrightarrow W_\mathcal{H}(i)\\
Z(\langle i,1\rangle) \longleftrightarrow U_\mathcal{H}(i)
\end{array}
\right. 
$$
This set also exists due to Comprehension Axiom $\Sigma_0^{B}$-$COMP$, since the formula $\phi(\langle i,j\rangle) =  (j=0\wedge W_\mathcal{H}(i)) \vee (j=1\wedge U_\mathcal{H}(i)) \in \Sigma_0^{B}$. Obviously, since $(W_\mathcal{H}(i)\leftrightarrow \neg U_\mathcal{H}(i))$, by the definition of $Z$ we have $MAP(n,2,Z)$. Consider any $i_1,i_2 <n$, such that $E_\mathcal{H}(i_1,i_2)$. Then $(W_\mathcal{H}(i_1)\wedge U_\mathcal{H}(i_2))$ or $(W_\mathcal{H}(i_2)\wedge U_\mathcal{H}(i_1))$. In the first case we have $Z(\langle i_1,0\rangle)\wedge Z(\langle i_2,1\rangle)$, in the second case $Z(\langle i_2,0\rangle )\wedge Z(\langle i_1,1\rangle )$, and in both cases $E_{\mathcal{K}_2}(0,1)$. Thus, $Z$ is a homomorphism from $\mathcal{H}$ to $\mathcal{K}_2$.
\end{proof}

\begin{lemmach}\label{MTL3} For all non-bipartite graphs $\mathcal{G}$, $V^0$ proves that there is no homomorphism from $\mathcal{G}$ to $\mathcal{K}_2$:
\begin{equation}\label{NONK2}
V^0\vdash\forall \mathcal{G} \,\,(NONBIP(\mathcal{G})\to  \neg HOM(\mathcal{G},\mathcal{K}_2))
\end{equation}
\end{lemmach}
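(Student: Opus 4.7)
The plan is to argue by contradiction, combining the transitivity lemma with an induction along the odd cycle promised by $NONBIP(\mathcal{G})$. Suppose both $NONBIP(\mathcal{G})$ and $HOM(\mathcal{G},\mathcal{K}_2)$ hold. Unpacking the definition of $NONBIP$ gives a number $k$ with $2\mid(k-1)$, a vertex set $V_{\mathcal{C}_k}$ and an edge set $E_{\mathcal{C}_k}$ with $CYCLE(\mathcal{C}_k)$ and $HOM(\mathcal{C}_k,\mathcal{G})$. By Lemma~\ref{HOMTRAMS} (homomorphism transitivity, already proved in $V^0$) we obtain $HOM(\mathcal{C}_k,\mathcal{K}_2)$, so it suffices to show that $V^0$ refutes the existence of a homomorphism from an odd cycle to $\mathcal{K}_2$.

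Let $Z \le \langle k-1,1\rangle$ be a witness, so $MAP(k,2,Z)$ holds and for every edge $E_{\mathcal{C}_k}(i,i')$ with $Z(\langle i,j\rangle), Z(\langle i',j'\rangle)$ one has $E_{\mathcal{K}_2}(j,j')$, i.e.\ $j\ne j'$. By $MAP(k,2,Z)$ the value $Z(\langle 0,0\rangle)$ is either true or false; I split into the two symmetric cases and assume $Z(\langle 0,0\rangle)$ (the other case is handled by swapping $0$ and $1$). The key step is to prove, by induction on $i<k$, the $\Sigma^B_0$ formula
\begin{equation*}
\psi(i)\;:=\;\bigl(2\mid i \to Z(\langle i,0\rangle)\bigr)\wedge\bigl(\neg\,2\mid i \to Z(\langle i,1\rangle)\bigr).
\end{equation*}
Since $\psi$ contains only bounded number quantifiers (the divisibility relation is $\Sigma^B_0$ by its defining formula $\exists z\le i\,(2z=i)$ and $Z$ appears only as a free string variable), $\Sigma^B_0$-$IND$ is available in $V^0$.

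The base case $\psi(0)$ is immediate from the assumption $Z(\langle 0,0\rangle)$. For the inductive step, suppose $\psi(i)$ holds and $i+1<k$. From $CYCLE(\mathcal{C}_k)$ we have $E_{\mathcal{C}_k}(i,i+1)$, and the $MAP$ condition produces a unique $j<2$ with $Z(\langle i+1,j\rangle)$. If $2\mid i$ then $Z(\langle i,0\rangle)$, and the homomorphism condition forces $j\ne 0$, hence $j=1$; since $\neg\,2\mid(i+1)$, this is exactly $\psi(i+1)$. The case $\neg\,2\mid i$ is symmetric. Applying $\psi(k-1)$, which is legitimate because $k-1<k$, and using $2\mid(k-1)$, we obtain $Z(\langle k-1,0\rangle)$. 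On the other hand the ``closing'' edge $E_{\mathcal{C}_k}(0,k-1)$ together with $Z(\langle 0,0\rangle)$ forces the unique image of $k-1$ to be $1$, so $Z(\langle k-1,1\rangle)$, contradicting $MAP(k,2,Z)$.

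The only real obstacle is bookkeeping: verifying that the induction formula $\psi(i)$ really is $\Sigma^B_0$ (which hinges on the divisibility relation and the pairing term being $\Sigma^B_0$-definable, both shown in Section~3.1) and that the WLOG case split on $Z(\langle 0,0\rangle)$ vs.\ $Z(\langle 0,1\rangle)$ is sound. Once this is checked, the argument is entirely finitary inside $V^0$ and requires nothing beyond $\Sigma^B_0$-$COMP$, $\Sigma^B_0$-$IND$, and the previously proved transitivity lemma.
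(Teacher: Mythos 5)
Your proposal is correct and follows essentially the same route as the paper's own proof: reduce to a homomorphism $Z$ from the odd cycle $\mathcal{C}_k$ to $\mathcal{K}_2$ via the transitivity lemma, then run $\Sigma^B_0$-$IND$ on exactly the same parity formula $(2\mid i \to Z(\langle i,0\rangle))\wedge(2\nmid i \to Z(\langle i,1\rangle))$ to derive a contradiction at the closing edge $E_{\mathcal{C}_k}(0,k-1)$. The only cosmetic difference is that the paper concludes by noting both endpoints of that edge map to $0$ (violating the homomorphism condition directly), whereas you derive $Z(\langle k-1,1\rangle)$ and contradict the uniqueness clause of $MAP$; both are valid.
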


\begin{proof}
Suppose that a graph $\mathcal{G}=(V_\mathcal{G},E_\mathcal{G})$, $\lvert V_\mathcal{G}\rvert = n$ is non-bipartite, that is there exist $k\leq n$, $\mathcal{C}_k = (V_{\mathcal{C}_k}, H_{\mathcal{C}_k})$ with $\lvert V_{\mathcal{C}_k}\rvert = k$, such that $2|(k-1)$, $CYCLE(\mathcal{C}_k)$ and $HOM(\mathcal{C}_k, \mathcal{G})$.

Assume that there exists a homomorphism form $\mathcal{G}$ to $\mathcal{K}_2$. Due Lemma \ref{HOMTRAMS} by transitivity there also exists a homomorphism $Z \leq \langle k-1,2\rangle$ from $\mathcal{C}_k$ to $\mathcal{K}_2$. Since it is a homomorphism from $\mathcal{C}_k$ to $\mathcal{K}_2$ then for every $0 \leq i \leq (k-1)$ either $Z(\langle i,0\rangle )$ or $Z(\langle i,1\rangle )$. 

Without loss of generality suppose that $Z(\langle 0,0\rangle )$ and lets prove that \\ $Z(\langle k-1,0\rangle )$ too. Since $2|(k-1)$, then $k>2$. Due $CYCLE(\mathcal{C}_k)$, $E_{\mathcal{C}_k}(0,1)$ and $E_{\mathcal{C}_k}(1,2)$. We claim that for every $i<k$, if $2|i$ then $Z(\langle i,0\rangle )$ and  $Z(\langle i,1\rangle )$ otherwise. Consider the formula:  
\begin{equation}\label{INDLEMMA}
\phi(i,k,Z) = (2|i \to Z(\langle i,0\rangle ))\wedge (2\nmid i \to Z(\langle i,1\rangle ))
\end{equation}
Since $\phi(i,n,Z) \in \Sigma_0^{B}$, we can prove this claim by induction on $i$, because $V^0$ proves $\Sigma_0^{B}$-$IND$:
\begin{equation}
(\phi(0,n,Z)\wedge \forall i<k (\phi(i,n,Z) \to \phi(i+1,n,Z)) \to \forall j<k \, \phi(j,n,Z)
\end{equation}
The base case is considered above. For step of induction suppose that it is true for $(i-1)$ and consider $i$. We have two options. If $2|(i-1)$ then by the induction hypothesis $Z(\langle i-1,0\rangle )$. Thus, since for $(i-1)$ by $CYCLE(\mathcal{C}_k)$ we have  $E_{\mathcal{C}_k}(i-1,i)$, by the definition of the homomorphism $Z(\langle i,1\rangle)$. Analogously, if $2\nmid (i-1)$ then $Z(\langle i,0\rangle )$.

Hence $Z(\langle 0,0\rangle )$ and $Z(\langle k-1,0\rangle )$. But since there is an edge between vertices $0$ and $(k-1)$ in the graph $\mathcal{C}_k$, $Z$ cannot be a homomorphism between $\mathcal{C}_k$ and $K_2$. Therefore, our assumption leads to contradiction and there is no homomorphism from $\mathcal{G}$ to $\mathcal{K}_2$.
\end{proof}

The main result of this paper is an immediate conclusion from the previous lemmas.
\begin{theorem}[The main universal statement]\label{MTT1} For all non-bipartite graphs $\mathcal{G}$ and bipartite graphs $\mathcal{H}$, $V^0$ proves that there is no homomorphism from $\mathcal{G}$ to $\mathcal{H}$:
\begin{equation}
V^0\vdash\forall \mathcal{G},\mathcal{H}  (BIP(\mathcal{H})\wedge NONBIP(\mathcal{G})\to  \neg HOM(\mathcal{G},\mathcal{H}))
\end{equation}
\end{theorem}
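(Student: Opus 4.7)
The plan is to derive Theorem \ref{MTT1} directly by chaining the three lemmas already proved, using $\mathcal{K}_2$ as an intermediate target. First I would reason by contradiction: fix arbitrary graphs $\mathcal{G}$ and $\mathcal{H}$ and assume $BIP(\mathcal{H}) \wedge NONBIP(\mathcal{G})$ together with $HOM(\mathcal{G},\mathcal{H})$. By Lemma \ref{MTL2} applied to $\mathcal{H}$, we obtain $HOM(\mathcal{H},\mathcal{K}_2)$. Then Lemma \ref{HOMTRAMS}, instantiated with $\mathcal{S} = \mathcal{K}_2$, combines $HOM(\mathcal{G},\mathcal{H})$ and $HOM(\mathcal{H},\mathcal{K}_2)$ to yield $HOM(\mathcal{G},\mathcal{K}_2)$. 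Finally, Lemma \ref{MTL3} applied to $\mathcal{G}$ (which is non-bipartite by assumption) gives $\neg HOM(\mathcal{G},\mathcal{K}_2)$, which is the required contradiction.

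Formally, the proof just composes three universal implications, each already derived in $V^0$. Since $V^0$ is closed under modus ponens and universal instantiation, no new comprehension or induction instances are needed at this step: the entire argument lives at the level of propositional reasoning over $V^0$-provable formulas. This is important, because the whole point of having verified the three lemmas in the weak theory $V^0$ is precisely to guarantee that their combination remains inside $V^0$ and hence, via the propositional translation discussed later in the paper, yields short constant-depth proofs.

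There is no real obstacle here; the only thing worth being careful about is keeping the quantifier structure right so that the resulting sentence is of the correct syntactic complexity. The hypothesis $BIP(\mathcal{H}) \wedge NONBIP(\mathcal{G})$ is $\Sigma^B_1$ and the conclusion $\neg HOM(\mathcal{G},\mathcal{H})$ is $\Pi^B_1$, so the full statement is $\forall \mathcal{G},\mathcal{H}$ of a $\Pi^B_1$ formula, i.e.\ $\Pi^B_1$. This is the complexity needed for the translation in the next subsection, and confirms that Theorem \ref{MTT1} really is the ``main universal statement'' whose propositional version admits short $R^*(log)$-proofs.
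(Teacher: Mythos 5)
Your proof is correct and follows exactly the same route as the paper: assume $HOM(\mathcal{G},\mathcal{H})$, use Lemma \ref{MTL2} to get $HOM(\mathcal{H},\mathcal{K}_2)$, compose via Lemma \ref{HOMTRAMS} to get $HOM(\mathcal{G},\mathcal{K}_2)$, and contradict Lemma \ref{MTL3}. Your added observations about the quantifier complexity of the resulting sentence are also consistent with what the paper does in the translation section.
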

\begin{proof}
Suppose that there exists a homomorphism from $\mathcal{G}$ to $\mathcal{H}$. According to Lemma \ref{MTL2}, since $\mathcal{H}$ is bipartite then there exists a homomorphism from $\mathcal{H}$ to $K_2$. Thus due to Lemma \ref{HOMTRAMS} by the transitivity there exists a homomorphism from $\mathcal{G}$ to $K_2$. But this is the contradiction with Lemma \ref{MTL3}.
\end{proof}

\subsection{Translating into tautologies}
\subsubsection{Translation of the main universal statement}
In this section we proceed with translation of the main universal statement in the theory $V^0$ into propositional tautologies. There is a well-known translation of $\Sigma_0^B$ formulas into propositional calculus formulas: we can translate each formula $\phi(\bar{x},\bar{X}) \in \Sigma_0^B$ into a family of propositional formulas \cite{10.5555/1734064}:
\begin{equation}
||\phi(\bar{x},\bar{X})|| = \{\phi(\bar{x},\bar{X})[\bar{m},\bar{n}]: \bar{m},\bar{n} \in \mathbb{N}\}
\end{equation}

\begin{lemmach}[\cite{10.5555/1734064}]\label{LPropositionalTranslation} For every $\Sigma_0^B(\mathcal{L}^2\mathcal{_{PA}})$ formula $\phi(\bar{x},\bar{X})$, there is a constant $d\in \mathbb{N}$ and a polynomial $p(\bar{m},\bar{n})$ such that for all $\bar{m},\bar{n} \in \mathbb{N}$, the propositional formula $\phi(\bar{x},\bar{X})[\bar{m},\bar{n}]$ has depth at most $d$ and size at most $p(\bar{m},\bar{n})$ \cite{10.5555/1734064}.
\end{lemmach}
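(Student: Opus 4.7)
The plan is to proceed by structural induction on the $\Sigma_0^B$ formula $\phi(\bar{x},\bar{X})$. Fix a tuple $\bar{m}\in \mathbb{N}^{|\bar{x}|}$ and lengths $\bar{n}\in\mathbb{N}^{|\bar{X}|}$; for each string variable $X_j$ introduce fresh propositional variables $p_{j,0},\dots,p_{j,n_j-1}$ standing for the bits $X_j(0),\dots,X_j(n_j-1)$. The translation $\psi[\bar{m},\bar{n}]$ of each subformula $\psi$ will be defined simultaneously with a proof that its depth is at most a constant $d(\psi)$ depending only on the syntactic shape of $\psi$, and its size is at most a polynomial $p_\psi(\bar{m},\bar{n})$ that also depends only on $\psi$.

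For the base cases, every closed number term (obtained by replacing each number variable by the corresponding numeral from $\bar{m}$ and each $|X_j|$ by $n_j$) evaluates to a concrete natural number, and this value is polynomial in $\bar{m},\bar{n}$ because $\mathcal{L}^2\mathcal{_{PA}}$-terms are built from $0,1,+,\cdot,|\cdot|$. Hence atomic formulas $s=t$ and $s\leq t$ translate to the constant $\top$ or $\bot$, and $X_j(t)$ translates to $p_{j,t^*}$ if the evaluated value $t^*$ is strictly less than $n_j$ and to $\bot$ otherwise; all such translations have depth $0$ and size $O(1)$. For the inductive step, Boolean connectives $\neg,\wedge,\vee$ translate to the corresponding propositional connectives, adding at most a constant to the depth and at most the sum of subformula sizes (plus a constant). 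A bounded quantifier $\exists x\leq t\,\psi$ translates as $\bigvee_{i=0}^{t^*}\psi(i,\bar{x},\bar{X})[\bar{m},\bar{n}]$, and $\forall x\leq t\,\psi$ analogously with $\bigwedge$; both add one to the depth and multiply the size by $t^*+1\leq q(\bar{m},\bar{n})$ for a fixed polynomial $q$ determined by $t$. Since $\phi$ has a fixed syntactic structure with a fixed number of quantifiers and connectives, iterating these bounds yields a constant depth $d$ and a polynomial size bound $p(\bar{m},\bar{n})$ obtained as a fixed-length product of the per-step blowups.

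The main obstacle, such as it is, lies in the bookkeeping around bounded quantifiers whose bounding term $t$ depends on outer variables being instantiated: one must evaluate $t$ using the current instantiation of those variables before unfolding the disjunction or conjunction. A closely related point is to fix once and for all the propositional variables $p_{j,k}$ assigned to string positions so that identical occurrences of $X_j(t)$, after instantiation, are translated to the same atom. Once this accounting is in place, the depth bound is immediate from the purely syntactic nature of propositional depth, and the size bound follows from a routine product of polynomial factors indexed by the fixed number of bounded quantifiers in $\phi$.
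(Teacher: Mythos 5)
The paper does not prove this lemma at all --- it is imported verbatim from the cited reference (Cook--Nguyen), and your structural induction is precisely the standard argument given there: fixed terms evaluate to values bounded by a fixed polynomial, atoms become constants or propositional variables, connectives add constant depth, and each of the fixed number of bounded quantifiers multiplies the size by a polynomial factor, so the bounds follow. Your argument is correct; the only (immaterial for depth and size) deviation from the standard convention is that under Cook--Nguyen's semantics of $|X_j|=n_j$ the atom $X_j(t)$ with $t^*=n_j-1$ translates to $\top$ rather than to the variable $p_{j,n_j-1}$, since the top bit of a string of length exactly $n_j$ is forced to be set.
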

There is a theorem that establish a connection between $\Sigma_0^B$-fragment of the theory $V^0$ and constant-depth Frege proof system:
\begin{theorem}[$V^0$ Translation, \cite{10.5555/1734064}] \label{Translation} Suppose that $\phi(\bar{x},\bar{X})$ is a $\Sigma_0^B$ formula such that $V^0 \vdash \forall\bar{x}\forall\bar{X}\phi(\bar{x},\bar{X})$. Then the propositional family $||\phi(\bar{x},\bar{X})||$ has polynomial size bounded depth Frege proofs. That is, there are a constant $d$ and a polynomial $p(\bar{m},\bar{n})$ such that for all $1\leq \bar{m},\bar{n}\in \mathbb{N}$, $\phi(\bar{x},\bar{X})[\bar{m},\bar{n}]$ has a $d$-Frege proof of size at most $p(\bar{m},\bar{n})$. Further there is an algorithm which finds a $d$-Frege proof of $\phi(\bar{x},\bar{X})[\bar{m},\bar{n}]$ in time bounded by a polynomial in $(\bar{m},\bar{n})$ \cite{10.5555/1734064}.
\end{theorem}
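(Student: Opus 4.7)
The plan is to prove the theorem by induction on the length of a $V^0$-derivation of $\forall\bar{x}\forall\bar{X}\,\phi(\bar{x},\bar{X})$. First I would move to a sequent-calculus presentation of $V^0$ and invoke a normalization / partial cut-elimination step that restricts every cut formula, and hence every formula appearing in the derivation, to $\Sigma_0^B$. This is legitimate because the only schemes of $V^0$ beyond $2$-$BASIC$ are $\Sigma_0^B$-$COMP$ (and the derivable $\Sigma_0^B$-$IND$), so by the standard free-cut-elimination / witnessing argument a derivation whose end-sequent is $\Sigma_0^B$ can be reshaped into one in which every line is a $\Sigma_0^B$ formula (possibly with free string variables acting as parameters for comprehension witnesses).

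Once in such a normal form, I would fix an instance of the parameters $[\bar m,\bar n]$ and translate the derivation line by line. By Lemma~\ref{LPropositionalTranslation} every $\Sigma_0^B$ line, after instantiation, becomes a propositional formula of depth at most a fixed constant $d$ and size polynomial in $(\bar m,\bar n)$. The $2$-$BASIC$ axioms translate to quantifier-free tautologies that are immediate in $d$-Frege. Each instance of $\Sigma_0^B$-$COMP$, of the form $\exists X \le y\,\forall z < y\,(X(z)\leftrightarrow \phi(z))$, translates to a tautology witnessed by introducing fresh propositional variables $X(0),\dots,X(y-1)$ and literally substituting $X(i):=\phi[i]$; this is a one-step definitional expansion, not a search. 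The derived $\Sigma_0^B$-$IND$ instances translate to chained modus-ponens tautologies whose $d$-Frege proofs are of size linear in the induction bound.

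The inductive step of the simulation then amounts to showing that every sequent-calculus inference whose formulas are $\Sigma_0^B$ can be simulated in $d$-Frege with only a constant-factor blow-up in size and no increase in depth. Summing over the polynomially many lines of the normalized $V^0$-proof yields the desired polynomial-size $d$-Frege proof of $\phi(\bar x,\bar X)[\bar m,\bar n]$, and since the whole construction is mechanical in the input proof and in $(\bar m,\bar n)$, one reads off the claimed polynomial-time algorithm that produces the $d$-Frege proof.

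The hard part will be the treatment of the bounded quantifier rules and the comprehension rule at the propositional level: when $\forall x < t$ is introduced or eliminated one must produce the propositional conjunction $\bigwedge_{i<t}\phi[i]$ uniformly, and when a new comprehension witness is used the associated definitional equivalences must be threaded through all subsequent lines without blowing up either size or depth. This requires exhibiting a canonical $d$-Frege derivation scheme for each translated inference rule, keeping the invariant that no simulated line ever exceeds depth $d$ and that the total number of propositional steps stays polynomial in the original proof length and in $(\bar m,\bar n)$. Once this bookkeeping is set up carefully, the remainder of the argument is routine induction.
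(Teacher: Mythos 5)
The paper does not prove Theorem~\ref{Translation} at all: it is imported verbatim, with citation, from Cook and Nguyen \cite{10.5555/1734064}, so there is no in-paper proof to compare yours against. Judged against the standard proof in that reference, your outline is essentially the right one: pass to a sequent-calculus ($LK^2$) presentation, use anchored/free-cut-free proofs so that the subformula property confines the derivation to low-complexity formulas, and then simulate the instantiated derivation line by line in bounded-depth Frege, using Lemma~\ref{LPropositionalTranslation} to bound the depth and size of each translated line. Your observations that the fixed first-order proof contributes only a constant to the depth and that the per-line simulations (bounded-quantifier rules, the chained modus ponens for $\Sigma_0^B$-$IND$) cost only polynomially many propositional steps in $(\bar m,\bar n)$ are exactly the points that make the bound work; note only that the number of lines of the normalized $V^0$-proof is a constant independent of $(\bar m,\bar n)$ (free-cut elimination may blow it up, but only by a constant), and the polynomial growth lives entirely in the simulation of each line.

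The one place where your sketch is genuinely thinner than the actual argument is the comprehension scheme. $\Sigma_0^B$-$COMP$ is a $\Sigma_1^B$ axiom (it begins with $\exists X\le y$), so after free-cut elimination the proof is \emph{not} literally confined to $\Sigma_0^B$ formulas, and the propositional translation $\|\cdot\|$ is only defined for $\Sigma_0^B$ formulas; "introducing fresh propositional variables and substituting $X(i):=\phi[i]$" is the right idea but needs a mechanism that threads this substitution coherently through every sequent in which the eigenvariable for $X$ occurs. Cook and Nguyen handle this by first passing to the universal conservative extension $\overline{V}^0$, in which comprehension is replaced by quantifier-free defining axioms for new string-valued function symbols; only then does the anchored-proof argument give a derivation all of whose formulas are $\Sigma_0^B$ over the extended vocabulary, and the new function symbols translate into the propositional formulas $\phi[i]$ you describe. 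If you intend to write the proof out in full, that conservative-extension step (or an equivalent witnessing argument) is the missing lemma you would need to state and prove; the rest of your plan is the routine bookkeeping you correctly identify it to be.
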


Consider the $\Pi_1^B$-formula $\phi(\mathcal{G},\mathcal{H})$ from Theorem \ref{MTT1} which expresses that there is no homomorphism from a non-bipartite graph $\mathcal{G}$ to a bipartite graph $\mathcal{H}$:
\begin{equation}\label{OURFORMULA}
 \begin{split}
&\phi(\mathcal{G},\mathcal{H}) = \neg GRAPH(\mathcal{G})\vee\neg GRAPH(\mathcal{H})\vee\\
&\neg BIP(\mathcal{H})\vee \neg NONBIP(\mathcal{G})\vee  \neg HOM(\mathcal{G},\mathcal{H})
 \end{split}
\end{equation}
For the graphs $\mathcal{G}=(V_\mathcal{G},E_\mathcal{G})$ with $|V_\mathcal{G}|=n$ and $\mathcal{H}=(V_\mathcal{H},E_\mathcal{H})$ with $|V_\mathcal{H}|=m$ we can rewrite this formula as follows:

\begin{align*}
&\phi(V_\mathcal{G},E_\mathcal{G},V_\mathcal{H},E_\mathcal{H}) = \\\\
&\exists i< n \,\neg V_\mathcal{G}(i)\vee \exists i<j<n ((\neg E_\mathcal{G}(i,j)\vee \neg E_\mathcal{G}(j,i))&\text{(I)}\\
&\wedge(E_\mathcal{G}(i,j)\vee E_\mathcal{G}(j,i))) \vee \exists i<n \,E_\mathcal{G}(i,i) \\
&\hspace{100pt}\vee \\
&\exists i< m \,\neg V_\mathcal{H}(i)\vee \exists i<j<m ((\neg E_\mathcal{H}(i,j)\vee \neg E_\mathcal{H}(j,i))&\text{(II)}\\
&\wedge(E_\mathcal{H}(i,j)\vee E_\mathcal{H}(j,i))) \vee \exists i<n \,E_\mathcal{H}(i,i) \\
&\hspace{100pt}\vee \\
&\forall W_\mathcal{H},U_\mathcal{H} \leq m \big(\exists i<m\, ((\neg W_\mathcal{H}(i)\vee U_\mathcal{H}(i))\wedge(W_\mathcal{H}(i)\vee \neg U_\mathcal{H}(i))) \vee&\text{(III)}\\
&\exists i<j<m (E_\mathcal{H}(i,j) \wedge (\neg W_\mathcal{H}(i)\vee \neg U_\mathcal{H}(j))\wedge(\neg W_\mathcal{H}(j)\vee \neg U_\mathcal{H}(i)))\big) \\
&\hspace{100pt}\vee \\
&\forall k\leq n (2|(k-1))\, \forall V_{\mathcal{C}_k} = k,\, \forall E_{\mathcal{C}_k} < 4k^2 \big(  (\exists i<k \, \neg V_{\mathcal{C}_k}(i) \vee \\
&\exists i<j<k ((\neg E_{\mathcal{C}_k}(i,j)\vee\neg E_{\mathcal{C}_k}(j,i))\wedge(E_{\mathcal{C}_k}(i,j)\vee E_{\mathcal{C}_k}(j,i)) )      \vee \\
&\exists i<k \, E_{\mathcal{C}_k}(i,i)  ) \vee (   \neg E_{\mathcal{C}_k}(0,k-1)\vee\exists i < (k-1)&\text{(IV)}\\
&\neg E_{\mathcal{C}_k}(i,i+1)  \vee \exists i,j < (k-1) \, (j\neq i+1 \wedge E_{\mathcal{C}_k}(i,j))) \vee \\
& (\forall Z \leq \langle k-1,n-1\rangle \,(\neg MAP(k,n,Z) \vee \exists i_1,i_2<k \exists j_1,j_2 <n \\
&E_{\mathcal{C}_k}(i_1,i_2)\wedge Z(\langle i_1,j_1\rangle)\wedge Z(\langle i_2,j_2\rangle)\wedge \neg E_{\mathcal{G}}(j_1,j_2)          ))       \big)\\
&\hspace{100pt}\vee  \\
&\forall Z' \leq \langle n-1,m-1\rangle \big(\neg MAP(n,m,Z')\vee  \exists i_1,i_2<n, \exists j_1,j_2 < m &\text{(V)}\\
&(E_\mathcal{G}(i_1,i_2)\wedge Z'(\langle i_1,j_1\rangle) \wedge  Z'(\langle i_2,j_2\rangle) \wedge \neg E_\mathcal{H}(j_1,j_2))\big)
\end{align*}
In strict form (with all string quantifiers occur in front) the formula $\phi(V_\mathcal{G},E_\mathcal{G},V_\mathcal{H},E_\mathcal{H})$ looks like:
\begin{equation}\label{MainFormula}
 \begin{split}
&\phi(V_\mathcal{G},E_\mathcal{G},V_\mathcal{H},E_\mathcal{H}) = \forall W_\mathcal{H},U_\mathcal{H} \leq m, \forall V_{\mathcal{C}_k}\leq n, \forall E_{\mathcal{C}_k} \leq 4n^2, \\&\forall Z \leq \langle k-1,n-1\rangle,\forall Z' \leq \langle n-1,m-1\rangle\\
&[\psi(n,m,V_\mathcal{G},V_\mathcal{H},W_\mathcal{H},U_\mathcal{H},V_{\mathcal{C}_k},E_\mathcal{G},E_\mathcal{H},E_{\mathcal{C}_k},Z,Z')],
 \end{split}
\end{equation}
where $\psi(n,m,V_\mathcal{G},V_\mathcal{H},W_\mathcal{H},U_\mathcal{H},V_{\mathcal{C}_k},E_\mathcal{G},E_\mathcal{H},E_{\mathcal{C}_k},Z,Z')$ is the $\Sigma_0^B$-formula. Thus, by Lemma \ref{LPropositionalTranslation} one can translate it into a family of short propositional formulas. For every free string variable $X$, $|X|=n_X$ in the formula $\psi$ we introduce propositional variables $p^X_{0},p^X_{1},...,p^X_{n_{(X-1)}}$ where $p^X_{i}$ is intended to mean $X(i)$. The first two parts (I),(II) of the formula $\phi(V_\mathcal{G},E_\mathcal{G},V_\mathcal{H},E_\mathcal{H})$ say that $\mathcal{G},\mathcal{H}$ are not graphs. Free number variables here are $n,m$, free string variables are $V_\mathcal{G},V_\mathcal{H},E_\mathcal{G},E_\mathcal{H}$. For graph $\mathcal{G}$, (I) translates into: 
\begin{equation}\label{NOTAGRAPH}
 \begin{split}
&\big[\bigvee_{i=0}^{n-1}(\neg p_i^{V_\mathcal{G}})\big]\vee \big[\bigvee_{j=0}^{n-1}\bigvee_{i=0}^{j-1} (\neg p_{\langle i,j\rangle}^{E_\mathcal{G}}\vee \neg p_{\langle j,i\rangle}^{E_\mathcal{G}})\wedge(p_{\langle i,j\rangle}^{E_\mathcal{G}}\vee p_{\langle j,i\rangle}^{E_\mathcal{G}})\big]\vee \\
&\big[\bigvee_{i=0}^{n-1}( p_{\langle i,i\rangle}^{E_\mathcal{G}})\big]
 \end{split}
\end{equation}
And for graph $\mathcal{H}$, (II) translates into:
\begin{equation}
 \begin{split}
&\big[\bigvee_{i=0}^{m-1}(\neg p_i^{V_\mathcal{H}})\big]\vee \big[\bigvee_{j=0}^{m-1}\bigvee_{i=0}^{j-1} (\neg p_{\langle i,j\rangle}^{E_\mathcal{H}}\vee \neg p_{\langle j,i\rangle}^{E_\mathcal{H}})\wedge(p_{\langle i,j\rangle}^{E_\mathcal{H}}\vee p_{\langle j,i\rangle}^{E_\mathcal{H}})\big]\vee \\
&\big[\bigvee_{i=0}^{m-1}( p_{\langle i,i\rangle}^{E_\mathcal{H}})\big]
 \end{split}
\end{equation}
The third part (III) of the formula $\phi(V_\mathcal{G},E_\mathcal{G},V_\mathcal{H},E_\mathcal{H})$ is about the graph $\mathcal{H}$ not being bipartite, free number variable here is $m$, free string variables are $W_\mathcal{H},U_\mathcal{H},E_\mathcal{H}$. The translation of (III) is:  
\begin{equation}
 \begin{split}
&\big[\bigvee_{i=0}^{m-1} (\neg p_i^{W_\mathcal{H}}\vee p_i^{U_\mathcal{H}})\wedge ( p_i^{W_\mathcal{H}}\vee \neg p_i^{U_\mathcal{H}})\big]\vee\\
&\big[\bigvee_{j=0}^{m-1}\bigvee_{i=0}^{j-1}
p_{\langle i,j\rangle}^{E_\mathcal{H}} \wedge(\neg p_i^{W_\mathcal{H}}\vee \neg p_j^{U_\mathcal{H}})\wedge (\neg p_j^{W_\mathcal{H}}\vee \neg p_i^{U_\mathcal{H}})\big]
 \end{split}
\end{equation}
The fourth part (IV) of the formula $\phi(V_\mathcal{G},E_\mathcal{G},V_\mathcal{H},E_\mathcal{H})$ expresses that $\mathcal{G}$ is not a non-bipartite graph. Free number variable here is $n$, free string variables are $V_{\mathcal{C}_k},E_{\mathcal{C}_k},Z$. This complex subformula we split into parts. Firstly, the part of subformula saying that $\mathcal{C}_k$ is not a graph is translated into:
\begin{equation}\label{NOTAGRAPHC}
 \begin{split}
&\big[\bigvee_{i=0}^{k-1}(\neg p_i^{V_{\mathcal{C}_k}})\big]\vee \big[\bigvee_{j=0}^{k-1}\bigvee_{i=0}^{j-1} (\neg p_{\langle i,j\rangle}^{E_{\mathcal{C}_k}}\vee \neg p_{\langle j,i\rangle}^{E_{\mathcal{C}_k}})\wedge(p_{\langle i,j\rangle}^{E_{\mathcal{C}_k}}\vee p_{\langle j,i\rangle}^{E_{\mathcal{C}_k}})\big]\vee \\
&\big[\bigvee_{i=0}^{n-1}( p_{\langle i,i\rangle}^{E_{\mathcal{C}_k}})\big]
 \end{split}
\end{equation}
Then the part saying that $\mathcal{C}_k$ is not a cycle translates into:
\begin{equation}
 \begin{split}
&\big[ \neg p^{E_{\mathcal{C}_k}}_{\langle 0,k-1\rangle} \big]  \vee  \big[\bigvee_{i=0}^{k-2} \neg p^{E_{\mathcal{C}_k}}_{\langle i,i+1\rangle}      \big]   \vee  \big[ \bigvee_{i=0}^{k-2} \bigvee_{j=0, \, j\neq i+1}^{k-2} p^{E_{\mathcal{C}_k}}_{\langle j,i\rangle}    \big]
 \end{split}
\end{equation}
And the part, saying that $Z$ is not a map or not a homomorphism between $\mathcal{C}_k$ and $\mathcal{G}$, is translated into:
\begin{equation}\label{IVLAST}
 \begin{split}
&\big[ \bigvee_{i=0}^{k-1}\bigwedge_{j=0}^{n-1} (\neg p^{Z}_{\langle i,j\rangle}) \big]\vee \big[  \bigvee_{i=0}^{k-1}\bigvee_{j_2=0}^{n-1} \bigvee_{j_1=0,\,j_1\neq j_2}^{n-1}(p^{Z}_{\langle i,j_1\rangle} \wedge   p^{Z}_{\langle i,j_2\rangle})\big] \vee\\
&\big[ \bigvee_{i_1,i_2=0}^{k-1}\bigvee_{j_1,j_2=0}^{n-1} ( p^{E_{\mathcal{C}_k}}_{\langle i_1,i_2\rangle}\wedge p^{Z}_{\langle i_1,j_1\rangle}\wedge p^{Z}_{\langle i_2,j_2\rangle}\wedge \neg p^{E_\mathcal{G}}_{\langle j_1,j_2\rangle})                  \big]
 \end{split}
\end{equation}
Finally, to get the translation of the whole subformula we need first to make a disjunction of all formulas (\ref{NOTAGRAPHC})-(\ref{IVLAST}) and then make a conjunction on $k$:

\begin{equation}
 \begin{split}
&\bigwedge_{k=3,\, 2|(k-1)}^{n-1}\Biggl[
\big[   \bigvee_{i=0}^{k-1}(\neg p_i^{V_{\mathcal{C}_k}})\big]\vee \big[\bigvee_{j=0}^{k-1}\bigvee_{i=0}^{j-1} (\neg p_{\langle i,j\rangle}^{E_{\mathcal{C}_k}}\vee \neg p_{\langle j,i\rangle}^{E_{\mathcal{C}_k}})\wedge(p_{\langle i,j\rangle}^{E_{\mathcal{C}_k}}\vee p_{\langle j,i\rangle}^{E_{\mathcal{C}_k}})\big]\vee \\
&\hspace{20pt}\big[\bigvee_{i=0}^{n-1}( p_{\langle i,i\rangle}^{E_{\mathcal{C}_k}})   \big] \vee \big[ \neg p^{E_{\mathcal{C}_k}}_{\langle 0,k-1\rangle} \big]  \vee  \big[\bigvee_{i=0}^{k-2} \neg p^{E_{\mathcal{C}_k}}_{\langle i,i+1\rangle}      \big]   \vee  \big[ \bigvee_{i=0}^{k-2} \bigvee_{j=0, \, j\neq i+1}^{k-2} p^{E_{\mathcal{C}_k}}_{\langle j,i\rangle}    \big] \vee\\
&\hspace{40pt}\big[ \bigvee_{i=0}^{k-1}\bigwedge_{j=0}^{n-1} (\neg p^{Z}_{\langle i,j\rangle}) \big]\vee \big[  \bigvee_{i=0}^{k-1}\bigvee_{j_2=0}^{n-1} \bigvee_{j_1=0,\,j_1\neq j_2}^{n-1}(p^{Z}_{\langle i,j_1\rangle} \wedge   p^{Z}_{\langle i,j_2\rangle})\big] \vee\\
&\hspace{40pt}\big[ \bigvee_{i_1,i_2=0}^{k-1}\bigvee_{j_1,j_2=0}^{n-1} ( p^{E_{\mathcal{C}_k}}_{\langle i_1,i_2\rangle}\wedge p^{Z}_{\langle i_1,j_1\rangle}\wedge p^{Z}_{\langle i_2,j_2\rangle}\wedge \neg p^{E_\mathcal{G}}_{\langle j_1,j_2\rangle})                  \big]
\Biggr] 
 \end{split}
\end{equation}
And the fifth part (V) of the formula $\phi(V_\mathcal{G},E_\mathcal{G},V_\mathcal{H},E_\mathcal{H})$ saying that there is no homomorphism from $\mathcal{G}$ to $\mathcal{H}$, with free number variables $n,m$, free string variables $Z',E_\mathcal{G},E_\mathcal{H}$, is translated into: 
\begin{equation}\label{NOTAHOM}
 \begin{split}
&\big[ \bigvee_{i=0}^{n-1}\bigwedge_{j=0}^{m-1} (\neg p^{Z'}_{\langle i,j\rangle}) \big]\vee \big[  \bigvee_{i=0}^{n-1}\bigvee_{j_2=0}^{m-1} \bigvee_{j_1=0,\,j_1\neq j_2}^{m-1}(p^{Z'}_{\langle i,j_1\rangle} \wedge   p^{Z'}_{\langle i,j_2\rangle})\big]\vee \\
&\big[ \bigvee_{i_1,i_2=0}^{n-1}\bigvee_{j_1,j_2=0}^{m-1} ( p^{E_\mathcal{G}}_{\langle i_1,i_2\rangle}\wedge p^{Z'}_{\langle i_1,j_1\rangle}\wedge p^{Z'}_{\langle i_2,j_2\rangle}\wedge \neg p^{E_\mathcal{H}}_{\langle j_1,j_2\rangle})                  \big]
 \end{split}
\end{equation}
The family of propositional formulas $||\psi(n,m,V_\mathcal{G},V_\mathcal{H},W_\mathcal{H},U_\mathcal{H},V_{\mathcal{C}_k},E_\mathcal{G},E_\mathcal{H},$ $E_{\mathcal{C}_k},$ $Z,Z')||$ is therefore the disjunction of formulas (\ref{NOTAGRAPH})-(\ref{NOTAHOM}) for all possible $n$, $m$, $n_{V_\mathcal{G}}$, $n_{V_\mathcal{H}}$, $n_{W_\mathcal{H}}$, $n_{U_\mathcal{H}}$, $n_{V_{\mathcal{C}_k}}$, $n_{E_\mathcal{G}}$, $n_{E_\mathcal{H}}$, $n_{E_{\mathcal{C}_k}}$, $n_{Z}$, $n_{Z'} \in \mathbb{N}$. By Theorem \ref{Translation} this family of tautologies has polynomial size bounded depth Frege proof.

We are now ready to prove our main goal, to show that the
formulas \\ $\neg HOM(\mathcal{G},\mathcal{H})$, for any non-bipartite graph $\mathcal{G}$ and bipartite graph $\mathcal{H}$, have short propositional proofs.

\begin{theorem}[The main result] For any non-bipartite graph $\mathcal{G}$ and bipartite graph $\mathcal{H}$ the propositional family $||\neg HOM(\mathcal{G},\mathcal{H})||$ has polynomial size bounded depth Frege proofs.
\end{theorem}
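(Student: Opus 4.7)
The plan is to combine Theorem \ref{MTT1} with the translation Theorem \ref{Translation} and then to specialize the resulting tautology family by a sequence of constant substitutions, leaving only the $Z'$-variables free.

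First, the formula $\phi(V_\mathcal{G},E_\mathcal{G},V_\mathcal{H},E_\mathcal{H})$ of (\ref{MainFormula}) is in strict $\Pi_1^B$ form with a $\Sigma_0^B$ matrix $\psi$, and by Theorem \ref{MTT1} its universal closure is provable in $V^0$. The leading universal set quantifiers translate into free propositional variables, so Theorem \ref{Translation} yields polynomial-size bounded-depth Frege proofs of every member of the family $||\psi||$, with free propositional variables for each bit of $V_\mathcal{G}, E_\mathcal{G}, V_\mathcal{H}, E_\mathcal{H}, W_\mathcal{H}, U_\mathcal{H}, V_{\mathcal{C}_k}, E_{\mathcal{C}_k}, Z, Z'$.

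Second, I fix the given non-bipartite graph $\mathcal{G}$ and bipartite graph $\mathcal{H}$ and perform substitutions into this proof. Into the variables $p^{V_\mathcal{G}}_i, p^{E_\mathcal{G}}_{\langle i,j\rangle}, p^{V_\mathcal{H}}_i, p^{E_\mathcal{H}}_{\langle i,j\rangle}$ I plug the truth values prescribed by the actual graphs; since $\mathcal{G}$ and $\mathcal{H}$ are genuine loopless undirected graphs, disjuncts (I) and (II) collapse to $\bot$. Using that $\mathcal{H}$ is bipartite, I fix a concrete bipartition $(W_0, U_0)$ and substitute its characteristic functions into $p^{W_\mathcal{H}}_i, p^{U_\mathcal{H}}_i$, forcing (III) to $\bot$. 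Dually, since $\mathcal{G}$ is non-bipartite, I extract from Definition \ref{DNBG} a specific odd $k_0 \leq n$, a cycle $\mathcal{C}_{k_0}$, and a homomorphism $Z_0: \mathcal{C}_{k_0}\to\mathcal{G}$, and substitute their codes into $p^{V_{\mathcal{C}_k}}_i, p^{E_{\mathcal{C}_k}}_{\langle i,j\rangle}, p^{Z}_{\langle i,j\rangle}$; then the $k_0$-conjunct of the bounded conjunction $\bigwedge_k$ inside (IV) becomes $\bot$, so (IV) as a whole is falsified.

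After these substitutions the only free propositional variables that remain are the $p^{Z'}_{\langle i,j\rangle}$, and the simplified tautology is precisely the translation piece (V), namely $||\neg HOM(\mathcal{G},\mathcal{H})||$. Because substituting constants for free variables inside a bounded-depth Frege derivation yields a bounded-depth Frege derivation of the substituted formula of no greater size and no greater depth, the polynomial upper bound from the first step transfers directly. The point requiring care is to check, against Definitions \ref{CYCLE} and \ref{DNBG}, that the substituted witness really makes the $k_0$-conjunct of (IV) evaluate to $\bot$ (i.e.\ $CYCLE$ and the $HOM$ clauses are simultaneously verified by the chosen $\mathcal{C}_{k_0}, Z_0$); since the codes of these witnesses have length polynomial in $n, m$, the substituted proof remains of polynomial size, completing the argument.
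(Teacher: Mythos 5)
Your proposal is correct and follows essentially the same route as the paper's own proof: invoke Theorem \ref{MTT1} together with the translation Theorem \ref{Translation}, then substitute the concrete graphs, a bipartition of $\mathcal{H}$, and an odd cycle with its homomorphism into $\mathcal{G}$ so that the translations of the first four disjuncts of (\ref{OURFORMULA}) evaluate to $0$, leaving exactly $||\neg HOM(\mathcal{G},\mathcal{H})||$. Your added remarks on the substitution property of bounded-depth Frege proofs and on falsifying the $k_0$-conjunct of (IV) only make explicit what the paper leaves implicit.
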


\begin{proof}
By the construction above and Theorem \ref{Translation} the translation of formula (\ref{OURFORMULA}) has $p$-size constant-depth Frege proof. If $\mathcal{G}$ and $\mathcal{H}$ are graphs, then the translations of the first two disjuncts in (\ref{OURFORMULA}) are propositional sentences that evaluate to $0$ and thus can be computed in the proof system.

Further, because $\mathcal{H}$ is bipartite, we can find its two parts $W_\mathcal{H}, U_\mathcal{H}$ and evaluate accordingly the atoms in the translation of $\neg BIP(\mathcal{H})$ corresponding to $W_\mathcal{H}$ and $U_\mathcal{H}$ such that the whole translation of the disjuct $\neg BIP(\mathcal{H})$ becomes false. That is, as before it is a propositional sentence that evaluates to $0$. Analogous argument removes the translation of the disjunct $\neg NONBIP(\mathcal{G})$: substitute for the atoms corresponding to a homomorphism from an odd cycle for some $k$ values determined by an actual homomorphism from $\mathcal{C}_k$ into $\mathcal{G}$. This will turn the translation of the fourth disjunct $\neg NONBIP(\mathcal{G})$ into a sentence equal to $0$ as well.

To summarize: after these substitutions the first four disjucts in the translation of the formula (\ref{OURFORMULA}) become propositional sentences evaluating to $0$ and thus the whole translation of the formula (\ref{OURFORMULA}) is equivalent to the translation of $\neg HOM(\mathcal{G},\mathcal{H})$. That is, we obtained polynomial size constant-depth Frege proof of 
$||\neg HOM(G,H)||$.
\end{proof}

\subsubsection{Other Remarks}
Actually, we can improve a little bit our upper bound result from the Sec. 3.3.1. To reason about graph we used convenient for this purpose set-up of two-sorted theory $V^0$, including the Comprehension axiom. But actually we can avoid using it in both proofs of Lemmas \ref{HOMTRAMS} and \ref{MTL2}. For example, in the proof of Lemma \ref{HOMTRAMS} instead of declaring the existence of the set $Z''(\langle i,k\rangle) \longleftrightarrow \exists j<m(Z(\langle i,j\rangle)\wedge Z'(\langle j,k\rangle))$ by the Comprehension axiom we can derive that there always exists such $j<m$ that $Z(\langle i,j\rangle)$ and $Z'(\langle j,k\rangle)$ (since $MAP(n,m,Z)\wedge MAP(m,t,Z')$) and therefore just manually construct the appropriate set $Z''$. Thus, we can switch between the theory $V^0$ and the weaker theory $I\Sigma^{1,b}_0$, which is axiomatized by $2$-$BASIC$ and the $I\Sigma^{1,b}_0$-$IND$ (where $I\Sigma^{1,b}_0$ denotes the class of $\mathcal{L}^2\mathcal{_{PA}}$-formulas with all number quantifiers bounded and with no set-sort quantifiers) when it is needed. Moreover, we can restrict further the complexity of formulas in the Induction scheme from the full class $I\Sigma^{1,b}_0$ to its subclass $\Sigma^b_{1}$ (which allows only existential number quantifiers bounded) since we use Induction scheme only once for $\Sigma^b_{1}$-formula (\ref{INDLEMMA}) in the proof of Lemma \ref{MTL3}. 

Denote by $T_1^1(\alpha)$ the two-sorted theory in the vocabulary $\mathcal{L}^2\mathcal{_{PA}}$, containing $2$-$BASIC$ and $IND$ scheme for $\Sigma^b_1$-formulas. Then there is a theorem: 
\begin{theorem}[\cite{krajicek2019proof}] 
Suppose that $\phi(\bar{x},\bar{X})$ is a $\Sigma_0^B$, DNF$_1$-formula such that $T_1^1(\alpha) \vdash \forall\bar{x}\forall\bar{X}\phi(\bar{x},\bar{X})$. Then the propositional family $||\phi(\bar{x},\bar{X})||$ has polynomial size $R^*(log)$-proofs. That is, there is a polynomial $p(\bar{m},\bar{n})$ such that for all $1\leq \bar{m},\bar{n}\in \mathbb{N}$, $\phi(\bar{x},\bar{X})[\bar{m},\bar{n}]$ has an $R^*(log)$-refutation of size at most $p(\bar{m},\bar{n})$. Further there is an algorithm which finds an $R^*(log)$-refutation of $\phi(\bar{x},\bar{X})[\bar{m},\bar{n}]$ in time bounded by a polynomial in $(\bar{m},\bar{n})$.
\end{theorem}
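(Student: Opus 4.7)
The plan is to reduce the claim to a propositional translation lemma for $T_1^1(\alpha)$-proofs of $\Sigma^B_0$ consequents, proceeding by induction on the structure of a $T_1^1(\alpha)$-proof $\pi$ of $\forall\bar x\forall\bar X\,\phi(\bar x,\bar X)$. First I would recast $T_1^1(\alpha)$ as a two-sorted Gentzen-style sequent calculus whose only non-logical rule beyond the $2$-$BASIC$-axioms is the $\Sigma^b_1$-$IND$ rule. A standard partial cut-elimination then puts $\pi$ into a normal form in which every cut-formula is $\Sigma^b_1$; in the $T_1^1(\alpha)$-fragment this reshaping blows up the proof only polynomially because the induction complexity is already bounded by $\Sigma^b_1$.

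Next I would set up the line-by-line propositional translation. Because the parameters $\bar m,\bar n$ are fixed, every number quantifier in a $\Sigma^b_1$-formula is bounded by an $\mathcal{L}^2_{\mathcal{PA}}$-term whose value is polynomial in $s = \max(\bar m,|\bar n|)$, so the existential witness is encoded by only $O(\log s)$ bits. Enumerating these witnesses turns each $\Sigma^b_1$-formula (and, symmetrically, each $\Pi^b_1$-formula) into a DNF (resp. CNF) with polynomially many terms of width $O(\log s)$ — exactly the clause format $R^*(log)$ manipulates. The DNF$_1$ restriction on $\phi$ ensures that after substituting the free parameters the initial sequent itself translates to such a log-width CNF, which becomes the set of clauses refuted by the output $R^*(log)$-proof.

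The main content is then to translate each inference of the normalized $\pi$ into a tree-like DNF-resolution derivation of polynomial size. Propositional rules, the structural rules, and the bounded quantifier rules each translate to $O(s^{O(1)})$ many $R^*(log)$-steps via the standard simulation of $LK$-sequents by resolution over small DNFs. The genuinely hard case is the $\Sigma^b_1$-$IND$ rule: from derivations of the base $\phi(0)$ and the step $\phi(x)\to\phi(x+1)$ I must extract a derivation of $\phi(t)$ for a polynomial term $t$. I would compose these along the chain $\phi(0),\phi(1),\ldots,\phi(t)$ by repeated cuts against fresh tree-like copies of the step-proof; because each $\phi(i)$ is a log-width DNF and the length of the chain is polynomial in $s$, the resulting tree-like refutation still has polynomial size even though hypotheses cannot be re-used.

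The main obstacle is precisely this interaction between tree-likeness and induction: every node of the induction chain demands its own copy of the step-derivation, so naive simulation would explode. The key observation controlling the blowup is that restricting both the induction formulas and the cut formulas to $\Sigma^b_1$ keeps all intermediate DNFs at width $O(\log s)$, matching the $R^*(log)$ format, so that the product of chain-length and per-node cost stays polynomial. Finally, since every step of the construction — normalization of $\pi$, enumeration of witnesses, and assembly of the induction tree — is effective and local, the translation procedure itself runs in time $p(\bar m,\bar n)$ for a polynomial $p$, yielding the algorithmic part of the statement.
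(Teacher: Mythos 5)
The paper does not prove this theorem at all: it is quoted verbatim from \cite{krajicek2019proof} as an external black box, so there is no in-paper proof to compare your attempt against. Your sketch is, in outline, the standard argument by which such translation theorems are established in the cited source (free-cut elimination down to the induction class, line-by-line translation of the resulting sequent proof, and unwinding each $\Sigma^b_1$-$IND$ inference into a polynomially long chain of cuts, each node getting its own copy of the step-derivation so that tree-likeness costs only a polynomial factor), and the overall structure is sound. Three points of your narrative are imprecise, though none is fatal. First, the $O(\log s)$ term width does not come from ``encoding the existential witness in $O(\log s)$ bits'': a bounded existential number quantifier translates into a disjunction of polynomial fan-in over all candidate values, and the narrow terms arise instead from the sharply bounded (length-bounded) quantifiers and conjunctions inside the matrix of a $\Sigma^b_1$-formula. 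Second, the role of the DNF$_1$ hypothesis is that $\neg\phi$ translates into a genuine set of clauses (disjunctions of literals) usable as input to an $R^*(log)$-refutation; these clauses may be polynomially wide, which is harmless since only the conjunctive terms occurring in the refutation are subject to the $\log$-size bound. Third, your concern that partial cut elimination ``blows up the proof only polynomially'' is moot: the $T_1^1(\alpha)$-proof $\pi$ is a single fixed first-order object whose size is a constant independent of $\bar{m},\bar{n}$, so any normalization of it is absorbed into the constants of the final polynomial; the only quantities that must be tracked polynomially in $\bar{m},\bar{n}$ are the sizes of the propositional translations of its lines. With these corrections your sketch matches the intended proof.
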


It is obvious that we can modify a little the formula (\ref{MainFormula}) to become DNF$_1$. Thus, the negations of the family of tautologies, expressing that there is no homomorphism from a non-bipartite graph $\mathcal{G}$ to a bipartite graph $\mathcal{H}$ have short $R^*(log)$-refutation in $R^*(log)$ system, which is essentially a constant-depth Frege system with depth $2$ and narrow logical terms. 

Another note that one of our auxiliary lemmas, Lemma \ref{MTL3}, gives us a collateral result. The $\Pi_1^B$-formula (\ref{NONK2}):
\begin{align*}
\phi(\mathcal{G}) = \neg NONBIP(\mathcal{G})\vee  \neg HOM(\mathcal{G},\mathcal{K}_2),
\end{align*}
expressing that there is no homomorphism from non-bipartite graph $\mathcal{G}$ to complete graph $\mathcal{K}_2$, also could be rewritten in strict form as the universal statement of the $\Sigma^B_0$-fragment of $V^0$. Thus, the family of tautologies into which one can translate this universal statement also has polynomial size $R^*(log)$-proofs. Essentially, the formula (\ref{NONK2}) means that the sets
of bipartite and non-bipartite graphs are disjoint, since we can define a bipartite graph $\mathcal{H}$ as:
\begin{equation}
BIP(\mathcal{H}) \longleftrightarrow HOM(\mathcal{H},\mathcal{K}_2).
\end{equation}
We know that resolution $R$ $p$-simulates $R^*(log)$ system (see Lemma \ref{RPSIMRLOG}). Thus, due to the feasible interpolation Theorem \ref{INTERPOL}, there is a $p$-time algorithm separating bipartite and non-bipartite graphs. Of course, this is well-known but here we obtain the algorithm as a consequence of the existence of short resolution proofs.

\begin{theorem}[The feasible interpolation theorem, \cite{krajicek2019proof}]\label{INTERPOL}
Assume that the set of clauses $\{A_1,...,A_m,B_1,...,B_l\}$
for all $i\leq m, j \leq l$ satisfies 
\begin{align*}
&A_i \subseteq \{p_1,\neg p_1,...,p_n,\neg p_n,q_1,\neg q_1,...,q_s,\neg q_s\};  \\
& B_j \subseteq \{p_1,\neg p_1,...,p_n,\neg p_n,r_1,\neg r_1,...,r_t,\neg r_t\},
\end{align*}
and has a resolution refutation with $k$ clauses. Then the implication  
\begin{align*}
&\bigwedge_{i\leq m}(\bigvee A_i) \to \neg \bigwedge_{j \leq l}(\bigvee B_j)
\end{align*}
 has an interpolating circuit $I(\bar{p})$ whose size is $O(kn)$. If the refutation is tree-like, $I$ is a formula. Moreover, if all atoms $\bar{p}$ occur only positively in all $A_i$, then there is a monotone interpolating circuit (or a formula in the tree-like case) whose size is $O(kn)$.
\end{theorem}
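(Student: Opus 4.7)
The plan is to build the interpolant by induction on the structure of the resolution refutation: for each clause $D$ in the proof construct a Boolean circuit $I_D(\bar{p})$, and take $I := I_\emptyset$ as the interpolating circuit. The driving semantic invariant is obtained by tagging each literal in each derived clause with an origin colour ($A$ or $B$) inherited from the axioms, so that every derived $D$ splits into an $A$-coloured part $D^A$ and a $B$-coloured part $D^B$. I maintain that, for every $\bar{a} \in \{0,1\}^n$, the value $I_D(\bar{a}) = 0$ forces $\{A_i(\bar{a},\bar{q})\}_i \models D^A(\bar{a})$, while $I_D(\bar{a}) = 1$ forces $\{B_j(\bar{a},\bar{r})\}_j \models D^B(\bar{a})$. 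At the empty clause both coloured parts are empty, so the invariant reduces exactly to ``$I(\bar{a}) = 0 \Rightarrow \{A_i(\bar{a},\bar{q})\}_i$ is unsatisfiable'' and ``$I(\bar{a}) = 1 \Rightarrow \{B_j(\bar{a},\bar{r})\}_j$ is unsatisfiable'', which is the defining property of an interpolating circuit; the consistency of the disjunction follows from the joint unsatisfiability of $\{A_i\} \cup \{B_j\}$.

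The inductive construction is then: axioms $A_i$ get the constant circuit $I_{A_i} := 0$ and axioms $B_j$ get $I_{B_j} := 1$. For a resolution step deriving $D$ from parents $P_1 = D' \cup \{x\}$ and $P_2 = D'' \cup \{\neg x\}$, the rule depends on the kind of variable $x$: if $x$ is a $\bar{q}$-variable, set $I_D := I_{P_1} \vee I_{P_2}$, since two $A$-derivations combine by resolving on $x$ while a single $B$-certificate in either parent already carries over to the child; if $x$ is an $\bar{r}$-variable, dually set $I_D := I_{P_1} \wedge I_{P_2}$; if $x = p_k$ is a shared variable, use the multiplexer $I_D := (\neg p_k \wedge I_{P_1}) \vee (p_k \wedge I_{P_2})$, which selects whichever parent is not trivialised by the value of $p_k$ under $\bar{a}$. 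Verifying the invariant in each case is a short case analysis. Each step adds $O(1)$ new gates, giving $O(k)$ gates in total and circuit size $O(kn)$ when the $n$ input wires are counted. In the tree-like case no clause is ever used as a hypothesis of more than one inference, so no subcircuit $I_D$ is reused and the construction produces a Boolean formula of the same size.

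For the monotone refinement, the hypothesis that every $p_k$ occurs only positively in every $A_i$ implies that every literal $\neg p_k$ appearing anywhere in the refutation must trace back to a $B$-axiom. Tracking this through the inductive construction, one can replace the non-monotone multiplexer at each $p_k$-resolution step by a monotone formula that still preserves the semantic invariant, following the classical Krajíček--Pudlák monotone-interpolation reduction. Combined with the fact that the $\bar{q}$- and $\bar{r}$-rules already use only $\vee$ and $\wedge$, the resulting circuit uses exclusively $\wedge$, $\vee$, and positive $\bar{p}$-literals, i.e., it is monotone (and remains a formula in the tree-like case). The main technical obstacle throughout is the careful bookkeeping of the $A$/$B$ colour partition under resolution and the verification of the invariant at $\bar{p}$-resolution steps; for the monotone version, the subtle point is justifying the monotone replacement of the multiplexer, which is precisely where the positivity hypothesis is used to rule out the non-monotone ``$\neg p_k$ selects the $A$-side'' branch.
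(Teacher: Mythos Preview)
The paper does not prove this theorem: it is quoted verbatim from \cite{krajicek2019proof} and used as a black box in the closing remarks of Section~3.3.2. There is therefore no ``paper's own proof'' to compare against.

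That said, your outline is the standard Kraj\'{\i}\v{c}ek construction and is essentially correct. One point that could be stated more carefully is the colouring of $\bar{p}$-literals: a $p_k$ or $\neg p_k$ occurring in a derived clause may descend from both an $A$-axiom and a $B$-axiom, so the split $D = D^A \cup D^B$ is not a priori well defined on shared literals. The usual fix is either to phrase the invariant after substituting $\bar{a}$ for $\bar{p}$ (so only $\bar{q}$- and $\bar{r}$-literals remain and the split is by variable sort, not by provenance), or to allow $\bar{p}$-literals to live in both parts. Your multiplexer step implicitly uses the ``substitute $\bar{a}$ first'' reading, which is fine, but the text should say so. For the monotone case, ``replace the multiplexer by a monotone formula'' is the right idea but is doing real work: the concrete replacement is $I_D := (p_k \wedge I_{P_2}) \vee I_{P_1}$ (or its dual), and one must re-verify the invariant using that under the positivity hypothesis the $A$-side entailment for $P_1$ already implies the $A$-side entailment for $D$ regardless of the value of $p_k$. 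As written, your monotone paragraph gestures at this without actually carrying it out.
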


\section{Lower Bounds}
In this section we consider another side of the Dichotomy of the $\mathcal{H}$-coloring problem, namely, $NP$-complete case for non-bipartite graphs $\mathcal{H}$. Well-studied example of the $\mathcal{H}$-coloring problem is the $\mathcal{K}_n$-coloring problem, which is essentially the $n$-coloring problem, where $\mathcal{K}_n$ is a complete graph on $n>2$ vertices. One of the obvious negative instances for CSP($\mathcal{K}_n$) is the graph $\mathcal{K}_{n+1}$: it is impossible to $n$-color complete graph with $n+1$ vertices. Propositional formula, expressing that there is no homomorphism from $\mathcal{K}_{n+1}$ to $\mathcal{K}_n$, can be reduced to the Pigeonhole Principle formula  PHP$^{n+1}_n$, because essentially trying to find a homomorphism from $\mathcal{K}_{n+1}$ to $\mathcal{K}_n$ is trying to map injectively the set $[0,n+1]$ to the set $[0,n]$. The PHP$^{n+1}_n$ formula is:

\begin{equation}
\neg[\underset{i}\bigwedge\underset{j}\bigvee p_{ij}\wedge \underset{i}\bigwedge\underset{j \neq j' }\bigwedge (\neg p_{ij} \vee \neg p_{ij'}) \wedge \underset{i \neq i'}\bigwedge\underset{j}\bigwedge (\neg p_{ij} \vee \neg p_{i'j})],
\end{equation}
where $(n+1)n$ atoms $p_{ij}$ with $i\in [n+1]$ and $j \in [n]$ expressing that $i$ is mapped to $j$. For PHP$^{n+1}_n$ there is a lot of known lower bounds in different weak proof systems:

\begin{theorem}[\cite{HAKEN1985297}] There exists a constant $c$, $c>1$, so that, for suffisiently large $n$, every resolution refutation of $\neg$PHP$^{n+1}_n$ contains al teast $c^n$ different clauses.
\end{theorem}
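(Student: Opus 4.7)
The plan is to follow Haken's bottleneck counting method. The first step is to single out a family of near-solutions that serve as counters. For each pigeon $i \in [n+1]$, let $\alpha_i$ be the truth assignment encoding a fixed bijection from $[n+1]\setminus\{i\}$ onto $[n]$; that is, $\alpha_i$ sets exactly one $p_{i'j}$ to $1$ for every $i'\neq i$, all other atoms to $0$. Such an $\alpha_i$ satisfies every axiom of the negation of PHP$^{n+1}_n$ except the single pigeon clause $\bigvee_j p_{ij}$, which it falsifies. These $n+1$ critical assignments (and their analogues for sub-instances obtained by fixing a partial matching) are the objects against which the size of a refutation will be measured.

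Next, given a resolution refutation $\pi = (D_1,\ldots,D_k)$, I would attach to each critical assignment $\alpha$ a \emph{transcript} through $\pi$: start at the empty clause $D_k$, which $\alpha$ trivially falsifies, and walk backwards by selecting at each resolution step the unique premise falsified by $\alpha$. This yields a sequence of clauses falsified by $\alpha$, terminating in an initial clause which must be the pigeon axiom corresponding to the missing pigeon of $\alpha$. Call a clause $C$ in $\pi$ a \emph{bottleneck} for $\alpha$ if $C$ lies on the transcript of $\alpha$ and has width at least $\varepsilon n$ for a suitable small constant $\varepsilon > 0$. The argument then splits into two parts. First, I would show that every $\alpha$ must encounter a bottleneck on its transcript, because the transcript proceeds from a width $0$ clause to a width $n$ axiom, and a careful analysis of how width grows along the transcript forces at least one intermediate clause itself to be wide. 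Second, I would show that any single wide clause $C$ serves as a bottleneck for at most a fraction $\mu^{n}$ of the critical assignments (over a suitably enlarged pool), for some $\mu<1$.

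Combining these with double counting of (critical assignment, bottleneck) pairs yields $k \ge c^n$ for some $c>1$. The main obstacle is the bottleneck lemma, i.e.\ the second part. The delicate point is to set up the counting pool so that wideness of $C$ genuinely translates into scarcity of critical assignments having $C$ on their transcript. Haken's device is to consider critical assignments for all PHP sub-instances obtained by fixing partial matchings, and, given a wide clause $C$, to iteratively match pigeons into the holes appearing in the literals of $C$ and restrict $\pi$ accordingly, exploiting the symmetry of PHP under simultaneous permutations of pigeons and holes. One has to verify that after each such restriction the resulting object is still a resolution refutation of a smaller PHP-type formula, that transcripts behave compatibly under the restriction, and that the $\varepsilon n$ literals of $C$ really do cut out all but an exponentially small fraction of the relevant matchings. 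Getting these interlocking combinatorial invariants right is the technical heart of the proof; with them in place, the promised constant $c>1$ follows from a straightforward averaging.
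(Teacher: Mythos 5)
The paper does not prove this theorem at all: it is quoted as a known result with a citation to Haken's 1985 paper, so there is no internal proof to compare yours against. Judged on its own terms, your proposal correctly reconstructs the architecture of Haken's bottleneck-counting argument --- critical assignments, backward transcripts through the refutation, wide clauses as bottlenecks, and a double count --- and the transcript construction itself is sound: exactly one premise of a resolution step is falsified by an assignment falsifying the resolvent, and the width of the falsified premise exceeds that of the resolvent by at most one, so a clause of width exactly $\lceil \varepsilon n\rceil$ really does occur on every transcript running from the empty clause back to a pigeon axiom of width $n$.

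The genuine gap is the second half, which you explicitly defer. The claim that a single wide clause lies on the transcripts of at most a $\mu^{n}$ fraction of critical assignments is, with ``wide'' meaning large clause width, simply false: the clause $p_{1,1}\vee p_{1,2}\vee\dots\vee p_{1,\lfloor n/2\rfloor}$ has width $n/2$ yet is falsified by a constant fraction of all critical assignments (those not sending pigeon $1$ into the first $\lfloor n/2\rfloor$ holes), so nothing prevents it from being a bottleneck for very many of them. Haken's argument needs either the monotone transformation of the refutation (replace each $\neg p_{ij}$ by $\bigvee_{i'\neq i}p_{i'j}$, which preserves truth values on critical assignments) combined with a count of positive literals, or an equivalent complexity measure recording how many pigeon axioms are essential for each clause; only after that does ``complex'' translate into ``falsified by exponentially few critical assignments.'' You correctly name the remaining device --- restricting by partial matchings and exploiting the symmetry of PHP --- but none of the interlocking verifications (that restrictions preserve refutations and transcripts, and that the counting over matchings yields the $\mu^{n}$ bound) is carried out, and these constitute essentially all of the work in the theorem. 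As it stands the proposal is an accurate plan for Haken's proof, not a proof.
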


\begin{theorem}[Ajtai 1988, Beame et al. 1992, \cite{krajicek_1995}] Assume that $F$ is a Frege proof system and $d$ is a constant, and let $n>1$.
Then in every depth $d$ $F$-proof of the formula PHP$^{n+1}_n$ at least $2^{n^{(1/6)^d}}$ different formulas must occur. In particular, each depth $d$ $F$-proof of PHP$^{n+1}_n$ must have size at least $2^{n^{(1/6)^d}}$ and must have at least $\Omega(2^{n^{(1/6)^d}})$ proof steps.
\end{theorem}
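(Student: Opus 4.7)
The plan is to prove the lower bound via the matching-decision-tree switching lemma approach pioneered by Ajtai and streamlined by Beame, Impagliazzo, Krajíček, Pitassi, and Pudlák. Assume toward contradiction that $\pi$ is a depth-$d$ Frege proof of $\mathrm{PHP}^{n+1}_n$ in which strictly fewer than $2^{n^{(1/6)^d}}$ distinct formulas occur. First I would normalize $\pi$ so that every line is a depth-$d$ formula in the $p_{ij}$ with alternating unbounded fan-in $\bigwedge/\bigvee$ and, after one cheap rewriting step, bottom fan-in $1$. The argument then applies a sequence of $d-1$ random partial-matching restrictions that, with positive probability, collapse the depth of every line by one at each stage.

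Concretely, set $n_0=n$ and $n_{k+1}=\lfloor n_k^{1/6}\rfloor$, and at stage $k$ draw a uniformly random partial matching $\rho_k$ between the currently surviving pigeons and holes that leaves exactly $n_{k+1}+1$ pigeons and $n_{k+1}$ holes free. The core ingredient is a \emph{matching switching lemma}: for any small-bottom-fan-in $\Sigma_2$ formula $\varphi$ over the surviving variables, the probability (over $\rho_k$) that $\varphi\!\restriction\!\rho_k$ fails to be equivalent to a small-bottom-fan-in $\Pi_2$ formula is at most $n_k^{-\Omega(w)}$, where $w$ controls the width parameter of the matching decision tree being built. A union bound over the fewer than $2^{n^{(1/6)^d}}$ lines of $\pi$ and over all $d-1$ stages leaves a positive-probability restriction sequence that simultaneously collapses every line down one depth level without blowing up bottom fan-in.

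After the $d-1$ restrictions one obtains a refutation, still of the same bound on size, of the surviving instance $\mathrm{PHP}^{n_{d-1}+1}_{n_{d-1}}$ in depth $2$ with bounded bottom fan-in. Since depth-$2$ Frege with narrow bottom fan-in is essentially DNF-resolution, one can finally invoke the Haken-style exponential lower bound (or its width-based strengthening) on resolution refutations of $\mathrm{PHP}$, yielding size at least $2^{\Omega(n_{d-1})}=2^{\Omega(n^{(1/6)^{d-1}})}$. Comparing this with the supposed upper bound $2^{n^{(1/6)^d}}$ produces the desired contradiction; since each line contributes at least once to the size, the same $2^{n^{(1/6)^d}}$ lower bound applies to the number of distinct formulas and to the number of proof steps.

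The main obstacle, and the combinatorial heart of the theorem, is the matching switching lemma itself. The classical Håstad lemma uses independent random restrictions, whereas here setting $p_{ij}=1$ automatically forces $p_{ij'}=0$ and $p_{i'j}=0$ for every other $i',j'$, so the restricted variables are strongly correlated. One must redo the Håstad encoding argument using \emph{matching decision trees}, whose queries are ``to which hole is pigeon $i$ matched?'' and ``which pigeon is matched to hole $j$?''; the bijection between deep trees and short encodings has to be adapted so that the encoding respects the matching constraint. Proving that a small-width $\Sigma_2$ formula almost surely acquires a shallow matching decision tree after $\rho_k$ is the delicate step, and the exponent $1/6$ emerges from optimizing the trade-off between restriction strength and remaining instance size. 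Everything else is a routine union bound plus an invocation of the resolution lower bound at the base case.
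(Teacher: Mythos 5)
The paper does not actually prove this theorem: it is imported from the literature (Ajtai 1988; Beame et al.\ 1992; as presented in Kraj\'i\v{c}ek's monograph) and used as a black box for the lower-bound discussion in Section 4, so there is no in-paper argument to compare yours against. Judged on its own, your outline reproduces the architecture of the standard proof --- normalization to depth $d$ with bottom fan-in $1$, a cascade of random partial-matching restrictions with parameters $n_{k+1}=\lfloor n_k^{1/6}\rfloor$ controlled by a switching lemma for matching decision trees, and a union bound over the assumed-small set of distinct formulas --- but it is not yet a proof, for two reasons.

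First, you explicitly defer the matching switching lemma, which you yourself identify as the combinatorial heart of the theorem; everything else in your write-up is routine, so what remains is a plan rather than an argument. Second, and more substantively, your base case fails as stated. After $d-1$ restriction rounds you have a depth-$2$ Frege refutation with bounded bottom fan-in, i.e.\ essentially an $R(t)$ (DNF-resolution) refutation of $\mathrm{PHP}^{n_{d-1}+1}_{n_{d-1}}$, and Haken's exponential lower bound --- as well as the Ben-Sasson--Wigderson width method you allude to --- applies only to resolution proper ($t=1$). Exponential lower bounds for $\mathrm{PHP}$ in $R(t)$ for nontrivial $t$ are a considerably later and harder development and cannot simply be ``invoked'' here. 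The standard way to close the argument is different: run the switching one more round so that \emph{every} subformula of every line is represented by a matching decision tree of small height, assemble these representations into a $k$-evaluation, observe that the clauses of $\neg\mathrm{PHP}$ evaluate to $1$ and that the Frege rules preserve evaluation to $1$, and derive the contradiction from the fact that the final line cannot evaluate to $1$ because a partial matching never completes to a total one. Finally, your union bound must range over all subformulas of all lines (not just the lines), and the failure probability $n_k^{-\Omega(w)}$ must be pitted against $2^{n^{(1/6)^d}}$; making that trade-off explicit is exactly where the exponent $(1/6)^d$ is fixed, and it is missing from your sketch.
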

We also can consider weak variants of PHP principle, PHP$_n^m$, where the number $m$ of pigeons is larger then $n+1$ (which will be equivalent to non-existence of homomorphism from $\mathcal{K}_m$ to $\mathcal{K}_n$). 
\begin{theorem}[\cite{RazborovLoverBounds}] For $m>n$ PHP$^m_n$ has no polynomial calculus refutation of degree $d\leq \lceil n/2\rceil$.
\end{theorem}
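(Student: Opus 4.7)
The plan is to establish this lower bound via the pseudo-expectation (or \emph{designator}) method, which is the standard route to polynomial calculus degree lower bounds. First I would formalize PHP$^m_n$ as a system of polynomial equations over a fixed field $F$: for variables $x_{ij}$, $i\in[m]$, $j\in[n]$, take the Boolean axioms $x_{ij}^2-x_{ij}$, the pigeon axioms $1-\sum_{j\in[n]}x_{ij}$ for each $i\in[m]$, and the injectivity axioms $x_{ij}x_{i'j}$ for every hole $j$ and every pair of distinct pigeons $i\neq i'$. A polynomial calculus refutation of degree $d$ is then a sequence of polynomials of degree at most $d$ ending in $1$, where each line is either an axiom, an $F$-linear combination of earlier lines, or the product of an earlier line with a single variable (assuming the product still has degree at most $d$).

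To rule out such a refutation, it suffices to exhibit a linear functional $E$ on the vector space of polynomials of degree $\leq d$ such that (a) $E[1]\neq 0$, (b) $E$ vanishes on every axiom, and (c) $E$ is compatible with the two PC rules in the sense that if $E[p]=0$ for every axiom-or-previously-derived $p$ of degree $<d$, then $E[x_{ij}\cdot p]=0$ as well. If such an $E$ exists, then by induction over the refutation every line is killed by $E$, contradicting $E[1]\neq 0$. The construction of $E$ is built from partial matchings. Any monomial $M=\prod_{k\leq t}x_{i_kj_k}$ of degree $t\leq d$ either defines a partial injective map $\pi_M\colon\{i_1,\dots,i_t\}\to[n]$, or it contains a clash; set $E[M]=0$ in the clash case, and otherwise let $E[M]$ be proportional to a suitable weighted count of injective extensions of $\pi_M$ to larger partial matchings from $[m]$ to $[n]$, normalized so that $E[1]\neq 0$, and extend linearly.

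The main obstacle is verifying (c) while keeping $d$ as large as $\lceil n/2\rceil$. For $d\leq \lceil n/2\rceil$ any monomial mentions at most $2d\leq n$ distinct pigeons and at most $2d\leq n$ distinct holes, so $\pi_M$ can always be extended on both sides; this slack is what makes the counting identities underlying (c) go through. Checking (b) for the Boolean and injectivity axioms is immediate from the clash convention, and for the pigeon axiom $1-\sum_j x_{ij}$ it reduces, after multiplying by any degree-$(d-1)$ monomial, to the combinatorial identity that extending $\pi_M$ by choosing a hole for a fresh pigeon exhausts all extensions.

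The technical heart of the argument, and the step I expect to consume most of the work, is showing that multiplying by a variable $x_{ij}$ corresponds under $E$ to conditioning on $\pi_M$ being extended by $i\mapsto j$, and that this conditioning commutes with $F$-linear combinations and with the axiom relations in a strictly degree-preserving way. This is exactly where Razborov's matching/counting identities are used, and where the bound $d\leq\lceil n/2\rceil$ is essential: beyond this threshold the induced partial matchings can saturate one side, the counting identities degenerate, and the pseudo-expectation can no longer be defined consistently.
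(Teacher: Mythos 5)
The paper does not prove this statement at all: it is imported verbatim from Razborov's paper \cite{RazborovLoverBounds} as a black-box lower bound, so there is no internal proof to compare yours against. Your sketch follows the route of the cited source (and its later simplifications): encode PHP$^m_n$ as polynomial equations and exhibit a linear functional on the space of polynomials of degree at most $d$ that is nonzero on $1$ yet annihilates everything derivable in degree $d$, defined on monomials via the partial matching they determine and weighted counts of its extensions. That is the correct strategy, and it is the one that actually yields the $\lceil n/2\rceil$ threshold over an arbitrary field.

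Two points in the sketch would not survive being turned into a proof as written. First, your condition (c) is not a property a single linear functional can have: from $E[p]=0$ alone one cannot conclude $E[x_{ij}\,p]=0$ (any $p$ in the kernel of $E$ with $x_{ij}p$ outside it is a counterexample). What the argument really needs is a subspace $S$ of the degree-$\leq d$ polynomials that contains the axioms, is closed under linear combinations and under multiplication by a variable whenever the product stays within degree $d$, and does not contain $1$; the functional $E$ merely certifies $1\notin S$, and the inductive invariant carried along the refutation is membership in $S$, not the vanishing of $E$. Establishing closure of $S$ under the multiplication rule is precisely Razborov's matching-extension computation and is the entire content of the theorem; your sketch asserts it rather than indicating how the counting identities deliver it. Second, the degree accounting is off: a monomial of degree $t\leq d$ mentions at most $t$ pigeons and $t$ holes, not $2t$ (and for odd $n$ one has $2\lceil n/2\rceil>n$ anyway); the factor of two behind the $\lceil n/2\rceil$ threshold arises in the analysis of the rules, where one must extend the matching of one monomial past the support of another or past the fresh hole introduced by a pigeon axiom, not from the support of a single monomial. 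Neither objection dooms the approach, but both must be repaired before this is a proof rather than a plan.
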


\begin{theorem}[\cite{10.1007/BFb0029951}] Let $c,d$ and a prime $p$ be fixed, and let $q$ be a number not divisible by $p$. Then there is $\delta>0$ such that for all $n$ large enough it holds: there is $m\leq n$ such that in every tree-like $F_d^c(MOD_p)$-proof of PHP$_n^{n+m}$ at least $exp(n^{\delta})$ different formulas must occur. 
 \end{theorem}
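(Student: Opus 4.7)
The plan is to combine random-restriction methods with a game-theoretic analysis of tree-like proofs. The starting point is a Prover-Delayer game characterization of tree-like $F_d^c(MOD_p)$-refutations of PHP$_n^{n+m}$: the Delayer maintains a partial injection from pigeons to holes and answers each Prover's query to a depth-$d$ $MOD_p$-formula by choosing the value that leaves as much freedom as possible to extend the current partial matching. The number of queries the Delayer can force in the worst case is a lower bound on the number of distinct formulas occurring in any tree-like refutation, so it suffices to exhibit a Delayer strategy that survives at least $n^\delta$ rounds with high probability.

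The technical heart of the argument is an approximation/switching step in the spirit of Razborov--Smolensky, adapted to the proof-complexity setting. I would first apply a random restriction to the matching variables of PHP$_n^{n+m}$, with density chosen so that (i) a substantial portion of the weak pigeonhole instance survives and (ii) with high probability each of the depth-$d$ $MOD_p$-formulas appearing in the proof becomes equivalent to a low-degree polynomial over $\mathbb{F}_p$, hence to a formula of significantly smaller depth. The coprimality of $q$ and $p$ enters here to guarantee that the approximating polynomials correctly capture the combinatorial invariants that the Delayer exploits. A union bound over the polynomially many subformulas of the hypothetical short proof then shows that the simplification succeeds simultaneously everywhere with positive probability, so we may fix a restriction for which the restricted refutation is effectively a proof in a much weaker system.

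Once the restriction is fixed, I would iterate: for constant-depth $F^c(MOD_p)$ it takes $O(d)$ rounds of restrict-and-simplify to reduce the proof to a decision-tree-like object, to which the standard pigeon-counting Delayer strategy for PHP applies directly, yielding the $\exp(n^\delta)$ lower bound on leaves. The main obstacle I expect is the simultaneous calibration of three parameters: the number of extra pigeons $m \leq n$, the density of the random restriction at each of the $d$ stages, and the degree of the $\mathbb{F}_p$-approximations used at each stage. These must be balanced so that after all $d$ rounds there remains a nontrivial residual weak PHP on $\Omega(n^\delta)$ pigeons and holes, the approximations have remained accurate with positive probability along the way, and the resulting $\delta > 0$ depends only on $c$, $d$, and $p$ as required by the statement.
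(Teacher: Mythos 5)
First, a framing point: the paper offers no proof of this statement at all---it is quoted as a known result from the cited reference---so there is nothing internal to compare your argument against; the comparison must be with the argument in the literature. That said, your proposal has a genuine gap at its technical core. You propose to apply random restrictions so that each depth-$d$ $MOD_p$-formula in the proof ``becomes equivalent to a low-degree polynomial over $\mathbb{F}_p$.'' No switching-lemma-type statement of this kind exists for formulas with $MOD_p$ gates: the Razborov--Smolensky method produces polynomials that agree with the formula only on \emph{most} inputs, not all, and in the proof-complexity setting this inexactness is fatal. A single assignment on which an approximator disagrees with the formula it replaces can invalidate the soundness of an inference step, and these errors cannot be killed by a union bound over the polynomially many lines of the proof, because they live in the exponentially large space of assignments rather than in the set of subformulas. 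This is exactly the obstruction that has kept lower bounds for dag-like $AC^0[p]$-Frege open for decades. A secondary issue is that you invoke a Prover--Delayer game whose round count lower-bounds the number of formulas in a tree-like $F_d^c(MOD_p)$-refutation; such games characterize tree-like resolution, but the corresponding characterization for tree-like constant-depth systems with counting gates is not available off the shelf and would itself need proof.

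The known proof exploits tree-likeness in an entirely different way: a tree-like $F_d^c(MOD_p)$-refutation with $S$ distinct formulas is translated into a polynomial calculus (or Nullstellensatz) refutation over $\mathbb{F}_p$ of degree roughly $(\log S)^{O(d)}$, and then the linear degree lower bound for PHP$_n^m$ in polynomial calculus---which is precisely the preceding theorem quoted in this paper---forces $(\log S)^{O(d)} = \Omega(n)$, i.e.\ $S \geq \exp(n^{\delta})$ with $\delta$ depending only on $c$, $d$ and $p$. In other words, the role you want random restrictions and approximating polynomials to play is played by an \emph{exact}, deterministic degree-based simulation; if you wish to salvage a restriction-based argument you would have to replace ``approximately equivalent'' by ``exactly representable,'' which is what that translation achieves without any probabilistic step.
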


Thus, we see that even for such an elementary negative instance of $NP$-complete case of the $\mathcal{H}$-coloring problem, CSP($\mathcal{K}_n$), the tautology, expressing that there is no homomorphism from $\mathcal{K}_{m}$ to $\mathcal{K}_n$, $m\geq n+1$, has no short proofs in many weak proof systems.  

\section{Conclusion}
We have constructed in Sec. 3.3 short proofs of propositional statements
expressing that $\mathcal{G} \notin CSP(\mathcal{H})$ for non-bipartite graphs $\mathcal{G}$ and bipartite graphs $\mathcal{H}$ by translating into propositional logic a suitable formalization of the algorithm for the $p$-time case of the $\mathcal{H}$-coloring problem. Note that while this algorithm is very simple, it is not $AC^0$-computable (parity is easily $AC^0$-reducible to the question whether or not a graph is bipartite) while our propositional proofs operate only with clauses and are thus, in this respect, more rudimentary than the decision algorithm is.

The condition for the $p$-time case of the $\mathcal{H}$-coloring problem (and the algorithm) are so simple that one could perhaps directly construct short propositional proofs and the use of bounded arithmetic may seem redundant. However, we think of this work as a stepping block towards proving analogous result for the full Dichotomy theorem. Its known proofs rely on universal algebra and formalizing them in a suitable bounded
arithmetic theory ought to be accessible while direct propositional
formalization looks unlikely. For this reason we used
bounded arithmetic here as a common framework. Moreover, this framework generally allows to obtain some collateral results that help to compose a complete picture of the problem.

An interesting issue which we left out is to prove a lower
bound not just for suitable $\mathcal{H}$ (as we did in Sec.4) but for
all $\mathcal{H}$ which fall under the $NP$-complete case of the Dichotomy
theorem. If CSP($\mathcal{H}$) is $NP$-complete then, unless $NP = coNP$, no
proof system can prove in $p$-size all valid statements
$\mathcal{G} \notin$ CSP($\mathcal{H}$). In addition, if the $NP$-completeness of the class can be formalized in a theory $T$ and we have a lower bound for
the proof system corresponding to $T$ (see \cite{krajicek2019proof} for this
topic) then one can use it to construct $\mathcal{G}$ for which the lower bound
holds. This uses well-known part of proof complexity but
we do feel that it adds to our understanding of the proof complexity
of CSP; it is rather a transposition of known results via
known techniques. For this reason we do not pursue here this
avenue of research.\color{black}

\bigskip
\noindent
\textbf{Acknowledgements:}{ I would like to thank my supervisor Jan Krajíček for helpful comments that resulted in many improvements to this paper. Also, I'm grateful to Pavel Pudlák, Neil Thapen, and others for the opportunity to present this work in the Institute of Mathematics of the Czech Academy of Sciences and for further discussion. Finally, I would like to thank Albert Atserias and Joanna Ochremiak, whose paper \cite{atserias2017proof} inspired this direction of research.}

\bibliographystyle{plain}    

\bibliography{bibliography}

\end{document}